\title[$\fspace{L}^\infty$-stability of dG time stepping]{The variable-order discontinuous Galerkin time stepping scheme for parabolic evolution problems is uniformly $\fspace{L}^\infty$-stable}
\author[L.~Schmutz and T.~P.~Wihler]{Lars Schmutz \and Thomas P.~Wihler}
\address{Mathematisches Institut, Universit\"at Bern, Sidlerstr.~5, CH-3012 Bern, Switzerland}
\email{lars.schmutz@math.unibe.ch \and wihler@math.unibe.ch}
\thanks{The authors acknowledge the support of the Swiss National Science Foundation (SNF), Grant No.~200021\underline{\phantom{--}}162990}
\newcommand{\set}[1]{\mathcal{#1}}
\renewcommand{\hspace}[1]{\mathbb{#1}}
\newcommand{\hspacedual}[1]{\mathbb{#1}^\star}
\newcommand{\fspace}[1]{\mathrm{#1}}
\newcommand{\operator}[1]{\mathsf{#1}}
\newcommand{\jump}[1]{\lbrack\!\lbrack#1\rbrack\!\rbrack} 
\newcommand{\abs}[1]{\left\lvert#1\right\rvert} 
\newcommand{\NN}[1]{\left\|#1\right\|}
\newcommand{\dualprod}[3]{\left<#1,#2\right>_{\hspacedual{#3}\times\hspace{#3}}}
\newcommand{\dd}{\,\operator{d}}
\newcommand{\Lspace}[2]{\fspace{L}^{#1}(#2)}
\newcommand{\Wspace}[2]{\fspace{W}^{#1}(#2)}
\newcommand{\Cspace}[1]{\fspace{C}^{0}(#1)}
\newcommand{\Lnorm}[3]{\NN{#1}_{\Lspace{#2}{#3}}}
\newcommand{\hnorm}[2]{\NN{#1}_{\hspace{#2}}}
\newcommand{\iprod}[3]{\left(#1,#2\right)_{#3}}
\newcommand{\lifting}[2]{\operator{L}_{#2}^{#1}}
\newcommand{\liftinge}[2]{\widetilde{\operator{L}}_{#2}^{#1}}
\newcommand{\pol}{\hspace{P}^{r_{m}}(I_{m};\hspace{X}_{m})}
\newcommand{\polr}{\hspace{P}^{r_{m}}(I_{m})}
\newcommand{\polrz}{\hspace{P}_{0}^{r_{m}}(I_{m})}
\renewcommand{\wr}{\hspace{W}^{r_{m}}_{\lambda}(I_m)}
\newcommand{\ex}[1]{e^{#1}}
\newcommand{\der}[1]{\frac{\dd}{\dd #1}}
\newcommand{\hk}[1]{K^m_{#1}}
\newcommand{\etam}{\eta_{\lambda,m}^{r_m}}
\newcommand{\projts}{\Pi^{r_{m}}_{m}}
\newcommand{\projtsi}{\Pi^{r_{i}}_{i}}
\newcommand{\gaml}[1]{\Gamma_{\lambda,m}^{#1}}
\newcommand{\gamlindex}[1]{\Gamma_{\lambda_i,m}^{#1}}
\newcommand{\gam}[1]{\Gamma_{m}^{#1}}
\newcommand{\gami}{(\gam{r_m})^{-1}}
\newcommand{\gamii}{(\Gamma_{i}^{r_i})^{-1}}
\newcommand{\gamli}{(\gaml{r_m})^{-1}}
\newcommand{\gamliindex}[1]{(\Gamma_{\lambda_i,m}^{#1})^{-1}}
\newcommand{\phit}[2]{\phi_{#2}^{#1}}
\newcommand{\psit}[2]{\psi_{#2}^{#1}}
\newcommand{\rhoo}{\rho_{m}^{r_m}}
\newcommand{\errorode}[2]{\operator{e}_{#2,#1}}
\newcommand{\ez}{\errorode{m}{\lambda}^+}
\newcommand{\sqrtb}[1]{\left(#1\right)^{\nicefrac12}}
\newcommand{\Cstab}{\Upsilon_{\ref{eq:Cstab}}(r_m, k_m\lambda)}
\newcommand{\Cstabdef}{\Upsilon_{\ref{eq:Cstab}}(r, \varrho)}
\newcommand{\hsol}{\bm{\Psi}^{r_m}}
\newcommand{\hsoli}{\bm{\Psi}^{r_i}}
\DeclareMathOperator{\spn}{span}
\DeclareMathOperator{\sgn}{sign}
\newtheorem{Remark}[equation]{Remark}
\newenvironment{remark}{\begin{Remark}\rm}{\end{Remark}}
\newtheorem{theorem}[equation]{Theorem}
\newtheorem{proposition}[equation]{Proposition}
\newtheorem{corollary}[equation]{Corollary}
\newtheorem{lemma}[equation]{Lemma}
\numberwithin{equation}{section}
\begin{document}

\begin{abstract}
In this paper we investigate the $\fspace{L}^\infty$-stability of fully discrete approximations of abstract linear parabolic partial differential equations. The method under consideration is based on an $hp$-type discontinuous Galerkin time stepping scheme in combination with general conforming Galerkin discretizations in space. Our main result shows that the global-in-time maximum norm of the discrete solution is bounded by the data of the PDE, with a constant that is robust with respect to the discretization parameters (in particular, it is uniformly bounded with respect to the local time steps and approximation orders).
\end{abstract}

\keywords{Discontinuous Galerkin time stepping, Galerkin discretizations, parabolic evolution problems, stability, $hp$-methods}

\subjclass[2010]{65J08,65M12,65M60,65M70}

\maketitle

\section{Introduction}

Let~$\hspace{H}$ and~$\hspace{X}$ be two (real) Hilbert spaces, equipped with the inner products~$\iprod{\cdot}{\cdot}{\hspace{H}}$ and~$\iprod{\cdot}{\cdot}{\hspace{X}}$, respectively, as well as with the corresponding induced norms~$\hnorm{\cdot}{\hspace{H}}$ and~$\hnorm{\cdot}{\hspace{X}}$. The respective dual spaces are denoted by~$\hspacedual{H}$ and~$\hspacedual{X}$. Suppose that $\hspace{X}$ is densely embedded in $\hspace{H}$, and consider the Gelfand triple
\begin{equation}\label{eq:Gelfand}
\hspace{X} \hookrightarrow \hspace{H} \cong \hspacedual{H} \hookrightarrow \hspacedual{X}.
\end{equation}
In this paper, based on a variable-order discontinuous Galerkin (dG) time stepping method in conjunction with a conforming Galerkin approximation in space, we will study the stability of the fully discrete numerical approximation of the linear parabolic problem
\begin{equation}\label{eq:slPDE}
\begin{split}
u'(t)+\operator{A}u(t)&=f(t),\qquad t\in(0,T],\\
u(0) &=u_0.
\end{split}
\end{equation}
Here, $\operator{A}:\,\hspace{X}\to\hspacedual{X}$ is a linear, self-adjoint and time-independent elliptic operator that is coercive and bounded in the sense that there are two constants~$\alpha_{\ref{eq:Aprop}}, \beta_{\ref{eq:Aprop}}>0$ such that
\begin{equation}\label{eq:Aprop}
\begin{split}
\dualprod{\operator{A}v}{v}{X}&\ge\alpha_{\ref{eq:Aprop}}\hnorm{v}{X}^2\qquad\forall v\in\hspace{X},\\
\abs{\dualprod{\operator{A}v}{w}{X}}&\le\beta_{\ref{eq:Aprop}}\hnorm{v}{X}\hnorm{w}{X}\qquad\forall v,w\in\hspace{X}.
\end{split}
\end{equation}
Furthermore we let $f \in \Lspace{2}{(0,T);\hspace{H}}$ and~$u_0\in\hspace{H}$ be a given source term and prescribed initial value, respectively. Applying standard notation for Sobolev and Bochner spaces (cf., e.g., \cite[\S1.5]{Roubicek:05}), a classical weak formulation of~\eqref{eq:slPDE} is to find $u \in \Lspace{2}{(0,T);\hspace{X}} \cap \Wspace{1,2}{(0,T);\hspacedual{X}}$ such that, for every $v \in \hspace{X}$, it holds that
\begin{equation}\label{eq:weakslPDE}
\begin{split}
\dualprod{u'}{v}{X}+\dualprod{\operator{A}u}{v}{X}&=\iprod{f(t)}{v}{\hspace{H}},\qquad t\in(0,T],\\
u(0) &=u_0.
\end{split}
\end{equation}
Here, we signify the duality pairing in $\hspacedual{X} \times \hspace{X}$ by $\dualprod{u}{v}{X}$; incidentally, this dual product can be seen as an extension of the inner product in~$\hspace{H}$, that is, for any $u \in \hspace{H}$, $v \in \hspace{X}$, we have $\iprod{u}{v}{H}=\dualprod{u}{v}{X}$; see, e.g., \cite[\S7.2]{Roubicek:05}. Recalling the continuous embedding
 \begin{equation*}
 \Lspace{2}{0,T;\hspace{X}} \cap \Wspace{1,2}{0,T;\hspacedual{X}} \hookrightarrow \Cspace{0,T;\mathbb{H}},
 \end{equation*}
cf., e.g., \cite[Lemma~7.3]{Roubicek:05}, we conclude that the solution of~\eqref{eq:weakslPDE} is continuous in time, i.e., $u \in \Cspace{0,T;\mathbb{H}}$. Furthermore it holds the stability estimate
\begin{equation}\label{eq:PDEstab}
\NN{u}_{\Lspace{2}{I;\hspace{X}}} + \NN{u'}_{\Lspace{2}{I;\hspacedual{X}}} + \NN{u}_{\Cspace{0,T;\mathbb{H}}} \leq C\left(\NN{u_0}_{\hspace{H}}+\NN{f}_{\Lspace{2}{I;\hspace{H}}} \right);
\end{equation}
see, e.g., \cite[Theorem~8.9]{Roubicek:05}.

In the context of parabolic partial differential equations (PDE), the discontinuous Galerkin time stepping methodology has been introduced a few decades ago in~\cite{Jamet:78}. Since then a lot of research has been conducted on this subject: we point to the classical works~\cite{EJ-I,EJ-II,EJ-IV,EJ-V,EJT85,LarssonThomeeWahlbin:98,Thomee:06}, as well as to the more recent articles~\cite{AkrivisMakridakis:04,AkrivisMakridakisNochetto:09,AkrivisMakridakisNochetto:11,MakridakisNochetto:03,MakridakisNochetto:06,LakkisMakridakis:06}, where a novel reconstruction technique for the purpose of \emph{a posteriori} error estimation has been proposed and analyzed. Whilst these articles mainly focus on low-order temporal Galerkin discretizations of fixed degree, the use of $hp$-type dG methods was proposed in~\cite{SchotzauSchwab:00,SchotzauSchwab:00a}. The $hp$-framework permits to employ locally different time step sizes and arbitrary variations of the local approximation orders, and, thereby, to attain high algebraic or even exponential rates of convergence in time. This feature is particularly powerful if local singularities (for instance, in form of a parabolic time layer due to incompatible initial data) appear~\cite{SchotzauSchwab:00,SchotzauSchwab:01,WerderGerdesSchotzauSchwab:01}, or if highly nonlocal~\cite{MatacheSchwabWihler:05,MatacheSchwabWihler:06} or high-dimensional~\cite{PetersdorffSchwab:04} problems need to be solved.

The present paper centers on the stability of fully discrete $hp$-version dG time discretizations of abstract linear parabolic problems. More precisely, given the solution, $u$, of~\eqref{eq:slPDE}, and its $hp$-dG approximation, $U$, our goal is to argue that the stability estimate~\eqref{eq:PDEstab} holds true also on the discrete level. Indeed, using standard energy arguments, it is fairly straightforward to show that~$U$ is bounded with respect to the~$\fspace{L}^2(\hspace{X})$-norm; indeed, this essentially follows from~\cite[Eq.~(2.18)]{SchotzauSchwab:00} and the boundedness of the duality pairing. In addition, applying a suitable reconstruction~$\widehat U$ of~$U$, see, e.g., \cite[\S2.1]{MakridakisNochetto:06} or~\cite[\S3.6]{GeorgoulisLakkisWihler:17}, and applying an inf-sup stability result (cf., e.g., ~\cite{ESV:16}) shows that~$\widehat U'$ is also stable in the~$\fspace{L}^2(\hspacedual{X})$-norm. 

In the current work our goal is to establish the stability of the discrete solution~$U$ with respect to the $\fspace{L}^\infty(\hspace{H})$-norm. We particularly emphasize on deriving an estimate with a (known) constant~$C>0$ that is \emph{uniformly bounded} with respect to the discretization parameters (i.e., in particular, the local time step lengths and approximation orders). Since our focus is on a pointwise bound, energy arguments are typically not appropriate in the discrete context; indeed, this is due to the fact that suitable test functions (such as cut-off functions) do typically \emph{not} belong to the underlying discrete test space. Furthermore, the application of inverse estimates usually involves constants that scale sub-optimally with respect to the local approximation orders, and, thereby, lead to non-uniform stability results. For these reasons we will pursue a completely different and novel approach: More precisely, we will first derive a pointwise formulation of the fully discrete scheme (Section~\ref{sc:strongform}) using a lifting operator technique as in~\cite{SchotzauWihler:10}; cf. also the temporal reconstruction approach~\cite{MakridakisNochetto:06,ESV:16,GeorgoulisLakkisWihler:17}. Then, we analyze the fully discrete parabolic operator, and  show that its inverse operator is $\fspace{L}^\infty(\hspace{H})$-stable (Section~\ref{sc:abstract}). In order to proceed in this direction, in Section~\ref{sec:introduction:dG}, we will first look at the special case where~$\hspace{H}=\hspace{X}=\mathbb{R}$ in~\eqref{eq:Gelfand}, and construct a representation formula (Section~\ref{sc:repform}) which is composed of two terms: The first term is based on the concept of a dG fundamental solution (Section~\ref{sc:fundsol}), and relates to the initial value, $u_0$, in~\eqref{eq:slPDE}. The second term, analogously as in the classical Duhamel principle, is an integral that involves the product of the right-hand side function, $f$, in~\eqref{eq:slPDE}, and an exponentially decaying expression in time. Subsequently, using a spectral decomposition, we will employ the scalar analysis on each time step in order to derive a stability bound for the inverse parabolic operator in the abstract case (Proposition~\ref{prop:gammapinverse}). Finally, inverting the pointwise form of the dG scheme, and applying the previous stability analysis, eventually implies the main result (Theorem~\ref{thm:main}).

\section{Fully discrete discontinuous Galerkin time stepping}\label{sec:introduction:dG}

\subsection{Variable-order time partitions and discrete spaces}
On an interval~$I=[0,T]$, $T>0$, consider time nodes $0 = t_0 < t_1 < \cdots < t_{M-1} < t_M = T$, which introduce a time partition $\set{M}=\{I_m\}_{m=0}^M$ of~$I$ into~$M+1$ time intervals~$I_m=(t_{m-1},t_m]$, $m=1,\ldots,M$, and~$I_0=\{t_0\}$. The (possibly varying) length $k_m = t_m - t_{m-1}$ of a time interval is called the $m$-th time step. We define the one-sided limits of an $\set{M}$-wise continuous function $v$ at each time node $t_m$, $0\le m\le M-1$, by
\[
v^+_m := \lim_{s\searrow 0} v(t_m+s), \qquad
v^-_m := \lim_{s\searrow 0} v(t_m-s),
\]
where~$v^-_0$ is considered to be a prescribed initial value. Then, the discontinuity jump of $v$ at $t_m$, $0\le m \le M-1$, is defined by $\jump{v}_m := v^+_m - v^-_m$.

Furthermore, to each interval we associate a polynomial degree~$r_m\ge 0$, which takes the role of a local approximation order. Moreover, given any (real) Hilbert (sub)space~$\hspace{V}\subset \hspace{H}$, an integer~$r\in\mathbb{N}_0$, and an interval~$J\subset\mathbb{R}$, the set
\[
\hspace{P}^{r}(J;\hspace{V})=\left\{p\in \Cspace{\bar J;\hspace{V}}:\,p(t)=\sum_{i=0}^rv_it^i,\, v_i\in \hspace{V}\right\}
\]
signifies the space of all polynomials of degree at most~$r$ on~$J$ with values in~$\hspace{V}$. If~$\hspace{V}=\mathbb{R}$, then we simply write~$\polr$.

A fully discrete framework for~\eqref{eq:weakslPDE} is based on replacing the Hilbert space~$\hspace{X}$ from~\eqref{eq:Gelfand} by finite-dimensional subspaces~$\hspace{X}_m\subset \hspace{X}$, $n_m:=\dim(\hspace{X}_m)<\infty$, on each interval~$I_m$, $0\le m\le M$. The $\hspace{H}$-orthogonal projection from~$\hspace{H}$ to~$\hspace{X}_m$, for~$0\le m\le M$, is given by
\begin{equation*}
\pi_m:\, \hspace{H}\to \hspace{X}_m,\qquad v\mapsto\pi_mv:\quad (v-\pi_mv,w)_\hspace{H}=0\quad\forall w\in \hspace{X}_m.
\end{equation*}
Notice the obvious stability property
\begin{equation}\label{eq:stabpi}
\|\pi_mv\|_{\hspace{H}}\le\|v\|_{\hspace{H}}\qquad\forall v\in\hspace{H}.
\end{equation}
Moreover, $\operator{A}_m: \hspace{X} \rightarrow \hspace{X}_{m}$ denotes the discretization of~$\operator{A}$ defined by
\begin{equation}\label{eq:Am}
 \iprod{\operator{A}_m u}{v}{\hspace{H}}=\dualprod{\operator{A}u}{v}{X} \qquad \forall v \in \hspace{X}_{m},
\end{equation}
for~$1\le m\le M$. Recalling~\eqref{eq:Aprop}, we observe that~$\operator{A}_m$ is invertible as an operator from~$\hspace{X}_m$ to~$\hspace{X}_m$.

\subsection{Fully discrete dG time stepping}\label{sc:strongform}

Based on the previous definitions, the \emph{fully discrete dG-in-time/conforming-in-space scheme} for~\eqref{eq:slPDE} is given iteratively as follows: Find~$U|_{I_m}\in\pol$ through the weak formulation
\begin{equation}\label{eq:semilinear-dg}
\begin{split}
  \int_{I_m} (U',V)_\hspace{H}  \dd t 
  +  (\jump{U}_{m-1}, V_{m-1}^+)_\hspace{H} &+ \int_{I_m}\dualprod{\operator{A}U}{V}{\hspace{X}} \dd t\\
  & =   \int_{I_m}(f,V)_\hspace{H} \dd t\qquad\forall V\in\pol,
  \end{split}
\end{equation}
for any~$1\le m\le M$. Here, for~$m=1$, we let with
\begin{equation}\label{eq:u0}
U_0^-:=\pi_0u_0,
\end{equation}
where~$u_0\in \hspace{H}$ is the initial value from~\eqref{eq:slPDE}, and, thereby, $\jump{U}_0=U_0^+-\pi_0u_0$. 

In order to write~\eqref{eq:semilinear-dg} in pointwise form, we proceed along the lines of~\cite{SchotzauWihler:10}. Specifically, for $1 \leq m \leq M$, and any~$z\in\hspace{X}_m$, we define the (linear) lifting operator
\begin{equation*}
\lifting{r_m}{m}: \hspace{X}_m \rightarrow \pol
\end{equation*}
by
\begin{equation*}
 \int_{I_{m}}\iprod{\lifting{r_m}{m}(z)}{V}{\hspace{H}} \dd t = \iprod{z}{V(t_{m-1})}{\hspace{H}} \qquad \forall v \in \pol.
\end{equation*}
Referring to~\cite[Lemma~6]{SchotzauWihler:10} there holds the explicit representation formula
\begin{equation}\label{eq:liftingstrong}
\lifting{r_m}{m}(z) = \frac{z}{k_{m}}\sum_{i=0}^{n} (-1)^{i}(2i+1)K^m_{i}(t),
\end{equation}
where $\{K^m_{i}\}_{i\ge 0}$ is the family of Legendre polynomials, affinely scaled from~$[-1,1]$ to $I_{m}$, such that
\begin{equation}\label{eq:Leg1}
(-1)^iK^m_i(t_{m-1})=K^m_i(t_m)=1,\qquad i\ge 0,
\end{equation}
and
\begin{equation}\label{eq:Leg2}
 \int_{I_{m}} K^m_{i}(t) K^m_{j}(t) \dd t = \frac{k_{m}}{2i+1}\delta_{ij} \qquad \forall i,j \in \mathbb{N}_0;
\end{equation}
see~\cite[\S3.1]{SchotzauWihler:10} for details.
For later purposes, we also introduce the endpoint lifting operator
\begin{equation*}
\liftinge{r_m}{m}: \hspace{X}_m \rightarrow \pol
\end{equation*}
by
\begin{equation*}
  \int_{I_{m}}\iprod{\liftinge{r_m}{m}(z)}{V}{\hspace{H}} \dd t = \iprod{z}{V(t_{m})}{\hspace{H}} \qquad \forall v \in \pol.
\end{equation*}
Using~\eqref{eq:liftingstrong} and~\eqref{eq:Leg1}, we may represent it as
\begin{equation}\label{eq:liftingstronge}
\liftinge{r_m}{m}(z) = \frac{z}{k_{m}}\sum_{i=0}^{r_m} (-1)^{i}(2i+1)K^m_{i}(-t) 
= \frac{z}{k_{m}}\sum_{i=0}^{r_m} (2i+1)K^m_{i}(t).
\end{equation}

Let $\projts : \fspace{L}^{2}(I_{m},\hspace{H}) \rightarrow \pol$ denote the fully discrete $\fspace{L}^{2}(I_{m},\hspace{H})$-projec\-tion defined by
\begin{equation*}
 \int_{I_{m}}\iprod{\projts (U)}{V}{\hspace{H}} \dd t = \int_{I_{m}}\iprod{U}{V}{\hspace{H}}\dd t \qquad \forall V \in \pol.
\end{equation*}
Then, employing the spatial projection~$\pi_m$ from~\eqref{eq:stabpi} and the discrete elliptic operator~$\operator{A}_m$ from~\eqref{eq:Am}, and using the lifting operator~$\lifting{r_m}{m}$, we transform~\eqref{eq:semilinear-dg} into
\begin{equation*}
\begin{split}
  \int_{I_m} \iprod{U' + \lifting{r_m}{m}(\pi_m\jump{U}_{m-1}) + \operator{A}_{m}U-\projts f}{V}{\hspace{H}} \dd t = 0 \qquad\forall V\in\pol. 
  \end{split}
\end{equation*}
This immediately implies the pointwise form
\begin{equation}\label{eq:semilinear-dg-strong}
\begin{split}
  U' + \lifting{r_m}{m}(\pi_m\jump{U}_{m-1}) + \operator{A}_{m}U = \projts f,\qquad t\in I_m.  
  \end{split}
\end{equation}
Following~\cite{HolmWihler:15}, for~$1\le m\le M$, we consider the dG-time operator 
\[
\chi_m^{r_m}:\, \pol \rightarrow \pol, 
\]
given by
\begin{equation}\label{eq:chim}
 \chi_m^{r_m} (U) := U' + \lifting{r_m}{m}(U_{m-1}^{+}),\qquad U\in\pol.
\end{equation}
Consequently, introducing the operator
\[
\gam{r_m}:\, \pol \rightarrow \pol 
\]
by
\begin{equation}\label{eq:gam}
 \gam{r_m}: = \chi_m^{r_m}+\operator{A}_{m},
\end{equation}
we can write~\eqref{eq:semilinear-dg-strong} as
\begin{equation}\label{eq:strong2}
 \gam{r_{m}}(U)= \projts f + \lifting{r_m}{m}(\pi_mU_{m-1}^{-}),
\end{equation}
for~$1\le m\le M$. Referring to~\cite[Proposition~2.6]{SchotzauSchwab:00}, we note that~\eqref{eq:semilinear-dg}  is uniquely solvable, and, hence, the operator~$\gam{r_m}$ from~\eqref{eq:gam} is an isomorphism on~$\pol$.

\section{Scalar problem in~$\mathbb{R}$}\label{sec:scalar}

In order to derive a stability analysis for the fully discrete scheme~\eqref{eq:strong2}, we focus first on the case where~$\hspace{H}=\hspace{X}=\mathbb{R}$. Specifically, for $1\le m\le M$, consider the scalar problem of finding a function~$u:\,I_m\to\mathbb{R}$ such that
\begin{equation*}
\begin{split}
u'(t)+\lambda u(t)&= f(t),\qquad t \in I_m,\\
u(t_{m-1}) &= u_{m-1}.
\end{split}
\end{equation*}
Here, $\lambda>0$ is a fixed parameter, $u_{m-1}\in\mathbb{R}$ is a prescribed initial value, and~$f:\,[0,T]\to\mathbb{R}$ is a given source function. The dG time discretization of this problem is formulated in strong form as
\begin{equation}\label{eq:scalar}
\gaml{r_m}(U) = \projts f+\lifting{r_m}{m}(u_{m-1}),\qquad t\in I_m,
\end{equation}
where, in this simplified context,~$\projts:\,\fspace{L}^2(I_m)\to\polr$ is the $\fspace{L}^2$-projection onto~$\polr$, and
\begin{equation}\label{eq:gamscalar}
\gaml{r_m} : \polr \rightarrow \polr,\qquad
 \gaml{r_m}(v)= \chi_m^{r_m}(v) + \lambda v,
\end{equation}
is the scalar version of~\eqref{eq:gam}. As mentioned earlier~$\gaml{r_m}$ is an isomorphism on~$\polr$. Hence, applying the inverse operator~$(\gaml{r_m})^{-1}$ to~\eqref{eq:scalar}, the dG solution~$U$ on $I_m$ can be represented as follows:
\begin{align}\label{eq:nhdG}
 U &= \gamli\left[\lifting{r_m}{m}(u_{m-1})+ \projts{f}\right],\qquad \text{on } I_m.
 \end{align}
Consequently, the stability of the inverse of~$\gam{r_m}$ is crucial in our analysis. We will attend to this matter by means of the classical scalar model problem
\begin{equation}\label{eq:slODE}
\begin{split}
\psi'(t)+\lambda \psi(t)&= 0,\qquad t \in I_m,\\
\psi(t_{m-1}) &= 1,
\end{split}
\end{equation}
with the solution~$\psi(t)=e^{-\lambda(t-t_{m-1})}$.

\subsection{DG fundamental solution}\label{sc:fundsol}
We denote the dG time stepping approximation of~\eqref{eq:slODE} by $\psit{r_m}{\lambda}\in\polr$, and call it the \emph{dG fundamental solution of degree~$r_m$ on~$I_m$}. Based on~\eqref{eq:scalar} and~\eqref{eq:nhdG}, with~$f\equiv 0$, and~$u_{m-1}=1$, it holds that
\begin{equation}\label{eq:psi-characterization}
 \gaml{r_m}(\psit{r_m}{\lambda}) = \lifting{r_m}{m}(1),
\end{equation}
and
\begin{equation}\label{eq:psi-characterization2}
 \psit{r_m}{\lambda} = \gamli (\lifting{r_m}{m}(1)),
\end{equation}
respectively.

Our goal is to derive an explicit representation formula for~$(\gaml{r_m})^{-1}$. To this end, we consider the subspace
\begin{equation*}
 \polrz := \{ v \in \polr : v(t_{m-1}) = 0 \},
\end{equation*}
as well as its image under $\gaml{r_m}$, i.e.,
\begin{equation*}
 \wr := \gaml{r_m}(\polrz).
\end{equation*}

\begin{lemma}\label{lem:wr-plus-lifting}
Let~$\lambda\ge0$. There holds $\dim \wr = r_m$, and we have the direct sum
 \begin{equation*}
  \wr \oplus \spn \{\lifting{r_m}{m}(1)\} = \polr.
 \end{equation*}
\end{lemma}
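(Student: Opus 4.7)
The plan is to prove the two assertions separately: $\dim\wr=r_m$ via injectivity of $\gaml{r_m}$ restricted to $\polrz$, and $\lifting{r_m}{m}(1)\notin\wr$ via a uniqueness argument based on the dG fundamental solution. Since $\dim\polr=r_m+1$, combining these two facts immediately yields that $\wr+\spn\{\lifting{r_m}{m}(1)\}$ is a direct sum filling all of $\polr$.

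For the dimension count, I would exploit that for any $v\in\polrz$ one has $v_{m-1}^+=v(t_{m-1})=0$, so the lifting contribution in $\chi_m^{r_m}(v)$ vanishes and $\gaml{r_m}(v)=v'+\lambda v$. A polynomial $v\in\polr$ satisfying $v'+\lambda v=0$ together with $v(t_{m-1})=0$ must then be identically zero: for $\lambda>0$ by a leading-coefficient/degree comparison (the top-degree term of $\lambda v$ cannot be cancelled by $v'$), and for $\lambda=0$ because $v'=0$ forces $v$ to be a constant, which is fixed to $0$ by the boundary condition. Hence $\gaml{r_m}$ is injective on $\polrz$, so $\dim\wr=\dim\polrz=r_m$.

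For the second step, suppose toward a contradiction that $\lifting{r_m}{m}(1)=\gaml{r_m}(v)$ for some $v\in\polrz$. Since $\gaml{r_m}$ is an isomorphism on $\polr$ (see the remark at the end of Section~\ref{sc:strongform}), the defining identity \eqref{eq:psi-characterization} forces $v=\psit{r_m}{\lambda}$, and in particular $\psit{r_m}{\lambda}(t_{m-1})=0$. The relation $\gaml{r_m}(\psit{r_m}{\lambda})=\lifting{r_m}{m}(1)$ then collapses to $(\psit{r_m}{\lambda})'+\lambda\psit{r_m}{\lambda}=\lifting{r_m}{m}(1)$, and testing against $V=\psit{r_m}{\lambda}\in\polr$, using the identity $\int_{I_m}(\psit{r_m}{\lambda})'\psit{r_m}{\lambda}\dd t=\tfrac12[(\psit{r_m}{\lambda})^2]_{t_{m-1}}^{t_m}$ and the defining property of the lifting, yields
\begin{equation*}
\tfrac12\bigl(\psit{r_m}{\lambda}(t_m)\bigr)^2+\lambda\int_{I_m}\bigl(\psit{r_m}{\lambda}\bigr)^2\dd t=\psit{r_m}{\lambda}(t_{m-1})=0.
\end{equation*}
For $\lambda>0$ both terms on the left are nonnegative and must separately vanish, forcing $\psit{r_m}{\lambda}\equiv 0$; this contradicts $\gaml{r_m}(\psit{r_m}{\lambda})=\lifting{r_m}{m}(1)\neq 0$, the latter being nonzero because the functional $V\mapsto V(t_{m-1})$ is not identically zero on $\polr$.

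I expect the subtlest point to be the borderline case $\lambda=0$ in this last step, where the energy identity gives only $\psit{r_m}{0}(t_m)=0$ and leaves one degree of freedom. To close this case I would additionally test the reduced identity $(\psit{r_m}{0})'=\lifting{r_m}{m}(1)$ against $V\equiv 1$; a straightforward integration gives $\psit{r_m}{0}(t_m)-\psit{r_m}{0}(t_{m-1})=1$, which combined with the two vanishing endpoint values yields the contradiction $0=1$ and completes the argument.
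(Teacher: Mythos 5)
Your proof is correct and follows essentially the same route as the paper's: the decisive step in both is to test $v'+\lambda v=\lifting{r_m}{m}(1)$ against $v\in\polrz$ and read off $\tfrac12 v(t_m)^2+\lambda\Lnorm{v}{2}{I_m}^2=v(t_{m-1})=0$. The only real differences are cosmetic — the paper works with $v$ directly (using the integrated Legendre basis for the dimension count and a degree argument for $\lambda=0$), whereas you detour through the fundamental solution $\psit{r_m}{\lambda}$ and the externally cited isomorphism property of $\gaml{r_m}$, a step you could omit by running the same energy identity on $v$ itself.
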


\begin{proof}
If~$\lambda=0$, then the result simply follows by observing that the derivative operator maps the space~$\polr$ onto~$\hspace{P}^{r_m-1}(I_m)$ if~$r_m>0$, and by noticing that the lifting operator is of exact degree~$r_m$. Hence, let us consider the case~$\lambda>0$. For~$i\ge 0$, and $1\le m\le M$, consider the integrated Legendre polynomials 
\begin{equation}\label{eq:intLeg}
Q^m_{i}(t):= \frac{2}{k_m}\int_{t_{m-1}}^{t}K^m_{i}(s)\dd s,\qquad t\in I_m.
\end{equation}
Evidently, the set~$\{Q^m_{i} \}_{i=0}^{r_{m}-1}$ is a basis of $\polrz$. Furthermore, since the polynomial degree of $\gaml{r_m} (Q^m_{i})$ is exactly~$i$, for~$i\ge 0$, it follows that $\{ \gaml{r_m}(Q^m_i) \}_{i=0}^{r_{m}-1}$ forms a basis of $\wr$. It therefore remains to show that the intersection of $\spn\{\lifting{r_m}{m}(1)\}$ and~$\wr$ is trivial. Take any~$w\in\wr$, and choose~$v\in\polrz$ such that~$w=v'+\lambda v =\alpha\lifting{r_m}{m}(1)$, for some~$\alpha\in\mathbb{R}$. Then, testing by~$v$, and integrating over $I_m$, yields
\[
0 = \alpha v_{m-1}^+=\alpha\int_{I_m}\lifting{r_m}{m}(1)v\dd t=\int_{I_m}(v'+ \lambda v)v \dd t = \frac{1}{2} v(t_m)^2 + \lambda \Lnorm{v}{2}{I_{m}}^2.
\]
Hence, we conclude that~$v\equiv 0$, and, therefore~$w\equiv 0$.
\end{proof}

It is interesting and useful for the subsequent analysis to notice that the above setup gives rise to the \emph{dG dual solution of degree~$r_m$ on~$I_m$}, which we denote by~$\phit{r_m}{\lambda}\in\polr$. It is defined via the differential equation
\begin{equation}\label{eq:phit-characterization}
 (\phit{r_m}{\lambda})' - \lambda \phit{r_m}{\lambda} - \liftinge{r_m}{m}(\phit{r_m}{\lambda} (t_m)) = \lifting{r_m}{m}(1),
\end{equation}
where the lifting operators~$\lifting{r_m}{m}$ and~$\liftinge{r_m}{m}$ are given in~\eqref{eq:liftingstrong} and~\eqref{eq:liftingstronge}, respectively, with~$\hspace{X}_m$ being replaced by~$\mathbb{R}$.

\begin{lemma}\label{lem:phit-characterization}
Suppose that~$\lambda\ge 0$. There exists exactly one solution of~\eqref{eq:phit-characterization} in~$\polr$, i.e., the dG dual solution~$\phit{r_m}{\lambda}$ is well-defined in~$\polr$. Furthermore, $\phit{r_m}{\lambda}$  is~$\fspace{L}^2$-orthogonal to~$\wr$, i.e.,
\begin{equation*}
  \int_{I_{m}} \phit{r_m}{\lambda} w \dd t = 0 \qquad \forall w \in \wr.
\end{equation*}
\end{lemma}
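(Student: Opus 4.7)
The plan is to prove the two parts separately, with existence/uniqueness following from a dimension count plus an energy-based injectivity argument, and the orthogonality following from integration by parts combined with the defining relations of the lifting operators.

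For the first part, I would define the linear map $T: \polr \to \polr$ by $T(\phi) := \phi' - \lambda \phi - \liftinge{r_m}{m}(\phi(t_m))$. Since $\polr$ is finite-dimensional, it suffices to show $T$ is injective. I would take $\phi \in \polr$ with $T(\phi) = 0$, test against $\phi$ itself, and integrate over $I_m$. The term $\int_{I_m}\phi'\phi\dd t$ yields $\tfrac12(\phi(t_m)^2 - \phi(t_{m-1})^2)$, while the definition of $\liftinge{r_m}{m}$ gives $\int_{I_m} \liftinge{r_m}{m}(\phi(t_m))\,\phi\dd t = \phi(t_m)^2$. Collecting terms produces
\begin{equation*}
-\tfrac12 \phi(t_m)^2 - \tfrac12 \phi(t_{m-1})^2 - \lambda \Lnorm{\phi}{2}{I_m}^2 = 0,
\end{equation*}
which, since $\lambda \ge 0$, forces $\phi(t_{m-1}) = \phi(t_m) = 0$ and (if $\lambda>0$) $\phi\equiv 0$; in the edge case $\lambda=0$, the equation $T(\phi)=0$ then reduces to $\phi' = 0$, so together with $\phi(t_{m-1})=0$ we still conclude $\phi\equiv 0$. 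Hence $T$ is a bijection, and $\phit{r_m}{\lambda}$ is uniquely defined.

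For the orthogonality, I would exploit the fact that any $w \in \wr$ equals $\gaml{r_m}(v)$ for some $v \in \polrz$; since $v(t_{m-1})=0$, the lifting term in $\chi_m^{r_m}(v)$ vanishes, so $w = v' + \lambda v$. Plugging this into $\int_{I_m} \phit{r_m}{\lambda} w\dd t$ and integrating by parts in the $v'$-term (using $v(t_{m-1})=0$) gives
\begin{equation*}
\int_{I_m} \phit{r_m}{\lambda} w \dd t = \phit{r_m}{\lambda}(t_m)\, v(t_m) - \int_{I_m}\left[(\phit{r_m}{\lambda})' - \lambda \phit{r_m}{\lambda}\right] v \dd t.
\end{equation*}
Substituting the defining equation~\eqref{eq:phit-characterization} for $(\phit{r_m}{\lambda})' - \lambda\phit{r_m}{\lambda}$ and invoking the defining properties of $\liftinge{r_m}{m}$ and $\lifting{r_m}{m}$ (noting that $v(t_{m-1})=0$ kills the $\lifting{r_m}{m}(1)$ contribution), the boundary and lifting contributions cancel precisely, yielding $0$.

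The only subtle point I expect is handling the degenerate case $\lambda = 0$ in the energy estimate (where coercivity does not pin down $\phi$ directly), but that is resolved cheaply by substituting the vanishing endpoint values back into the equation itself. The rest is a mechanical application of integration by parts and the duality identities built into the lifting operators.
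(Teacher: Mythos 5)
Your proof is correct and follows essentially the same route as the paper: injectivity of the operator $v \mapsto v' - \lambda v - \liftinge{r_m}{m}(v(t_m))$ on $\polr$ via an energy (testing) argument, followed by integration by parts and the defining identities of the lifting operators for the orthogonality. The only (harmless) difference is that you dispose of the case $\lambda=0$ by pushing the energy identity through and then feeding the vanishing endpoint values back into the equation, whereas the paper treats $\lambda=0$ separately with a degree-counting argument.
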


\begin{proof}
Let us define an operator
$\operator{\Psi}: \polr \rightarrow \polr$ by
\begin{equation}\label{eq:Psi-operator}
 \operator{\Psi}(v) := v' - \lambda v - \liftinge{r_m}{m}(v (t_m)).
\end{equation}
We show that the kernel of~$\operator{\Psi}$ is trivial, i.e., $\operator{\Psi}$ is an isomorphism. Suppose that~$v\in\polr$, and~$\operator{\Psi}(v)\equiv 0$. In case that~$\lambda=0$, this implies that~$v'=\liftinge{r_m}{m}(v (t_m))$. Now, since~$\liftinge{r_m}{m}(v (t_m))$ has degree exactly~$r_m$, unless~$v (t_m)=0$, we conclude that~$v'\equiv0$ as well as~$v(t_m)=0$. This, in turn, leads to~$v\equiv 0$. Otherwise, if~$\lambda>0$,  we test~\eqref{eq:Psi-operator} by~$v\in\polr$, and integrate over~$I_m$. Then,
\begin{align*}
 0=\int_{I_m}\operator{\Psi}(v)v\dd t 
 &=  \int_{I_m}\left(v' - \lambda v - \liftinge{r_m}{m}(v (t_m))\right)v\dd t \\
	    &= \frac{1}{2}\left(v(t_m)^2 - v(t_{m-1})^2\right) - \lambda \Lnorm{v}{2}{I_m}^{2} - v(t_m)^2 \\
	    &= -\frac{1}{2}(v(t_m)^2 + v(t_{m-1})^2) - \lambda \Lnorm{v}{2}{I_m}^{2}.
\end{align*}
This immediately results in~$v\equiv 0$. Hence, there exists exactly one
$\phit{r_m}{\lambda} \in \polr$ such that $\operator{\Psi}(\phit{r_m}{\lambda}) = \lifting{r_m}{m}(1)$.

In order to prove the second assertion, we let $w \in \wr$, and choose $v \in \polrz$ such that $w = v' + \lambda v$. Then, integrating by parts, there holds that
\begin{align*}
 \int_{I_m} \phit{r_m}{\lambda} w \dd t &= \int_{I_m} \phit{r_m}{\lambda} (v ' + \lambda v) \dd t \\
   &= \int_{I_m} (-(\phit{r_m}{\lambda})' + \lambda \phit{r_m}{\lambda})v \dd t + \phit{r_m}{\lambda} (t_m) v(t_m) \\
   &=  \int_{I_m} (-(\phit{r_m}{\lambda})' + \lambda \phit{r_m}{\lambda} + \liftinge{r_m}{m}(\phit{r_m}{\lambda}(t_m))v \dd t.
\end{align*}
Invoking~\eqref{eq:phit-characterization}, we obtain
\begin{align*}
\int_{I_m} \phit{r_m}{\lambda} w \dd t
&= \int_{I_m} -\lifting{r_m}{m}(1)v 
= -v(t_{m-1}) 
= 0.
\end{align*}
Therefore, $\phit{r_m}{\lambda}$ is in the orthogonal complement of $\wr$.
\end{proof}

\begin{lemma}\label{lem:init-val-psi}
Let~$\lambda\ge 0$. The initial values of the dG fundamental solution~$\psit{r_m}{\lambda}$ and the dG dual solution~$\phit{r_m}{\lambda}$ satisfy
 \begin{equation}\label{eq:init-val-psi}
  \phit{r_m}{\lambda}(t_{m-1}) = -\psit{r_m}{\lambda}(t_{m-1}).
 \end{equation}
\end{lemma}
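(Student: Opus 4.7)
The plan is to couple the two characterizing identities~\eqref{eq:psi-characterization} and~\eqref{eq:phit-characterization} by pairing them in $\Lspace{2}{I_m}$ and integrating by parts in time. The two liftings $\lifting{r_m}{m}$ and $\liftinge{r_m}{m}$ that appear in the definitions of $\psit{r_m}{\lambda}$ and $\phit{r_m}{\lambda}$ are designed precisely so that the boundary contributions produced by this procedure match up and cancel, leaving only an identity between the two initial values.

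Concretely, I would first multiply~\eqref{eq:psi-characterization} by $\phit{r_m}{\lambda}$ and integrate over $I_m$. By the defining property of $\lifting{r_m}{m}$, the right-hand side collapses to $\phit{r_m}{\lambda}(t_{m-1})$. Expanding $\gaml{r_m}(\psit{r_m}{\lambda}) = (\psit{r_m}{\lambda})' + \lifting{r_m}{m}(\psit{r_m}{\lambda}(t_{m-1})) + \lambda\,\psit{r_m}{\lambda}$, the lifting term contributes $\psit{r_m}{\lambda}(t_{m-1})\,\phit{r_m}{\lambda}(t_{m-1})$, while an integration by parts on $(\psit{r_m}{\lambda})'$ moves the derivative onto $\phit{r_m}{\lambda}$ and produces the boundary contributions $\phit{r_m}{\lambda}(t_m)\,\psit{r_m}{\lambda}(t_m) - \phit{r_m}{\lambda}(t_{m-1})\,\psit{r_m}{\lambda}(t_{m-1})$. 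The latter cancels the preceding lifting contribution exactly.

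At this point I would substitute~\eqref{eq:phit-characterization}, rewritten as $-(\phit{r_m}{\lambda})' + \lambda\,\phit{r_m}{\lambda} = -\liftinge{r_m}{m}(\phit{r_m}{\lambda}(t_m)) - \lifting{r_m}{m}(1)$, into the remaining volume integral. The term $-\liftinge{r_m}{m}(\phit{r_m}{\lambda}(t_m))$ integrated against $\psit{r_m}{\lambda}$ yields $-\phit{r_m}{\lambda}(t_m)\,\psit{r_m}{\lambda}(t_m)$, which exactly cancels the remaining boundary contribution at $t_m$; similarly, $-\lifting{r_m}{m}(1)$ integrated against $\psit{r_m}{\lambda}$ yields $-\psit{r_m}{\lambda}(t_{m-1})$. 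Collecting everything, the whole chain reduces to $-\psit{r_m}{\lambda}(t_{m-1}) = \phit{r_m}{\lambda}(t_{m-1})$, which is~\eqref{eq:init-val-psi}.

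The proof is essentially symbolic bookkeeping rather than a source of real difficulty: the only nontrivial point is that the initial lifting $\lifting{r_m}{m}$ (picking out evaluation at $t_{m-1}$) and the endpoint lifting $\liftinge{r_m}{m}$ (picking out evaluation at $t_m$) are paired, with the correct signs, with precisely the two boundary terms arising from integration by parts. The main obstacle is therefore just to keep the signs and the endpoints straight; in particular, the orthogonality statement from Lemma~\ref{lem:phit-characterization} is not needed for~\eqref{eq:init-val-psi}, although the argument could alternatively be organized around it.
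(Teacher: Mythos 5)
Your argument is correct and is essentially the same as the paper's: both proofs pair the characterizations \eqref{eq:psi-characterization} and \eqref{eq:phit-characterization} against each other, integrate by parts in time, and let the two liftings absorb the boundary terms; the paper merely tests \eqref{eq:phit-characterization} with $\psit{r_m}{\lambda}$ and then substitutes \eqref{eq:psi-characterization}, i.e.\ the mirror image of your bookkeeping. The signs and endpoint contributions in your computation all check out.
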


\begin{proof}
Testing~\eqref{eq:phit-characterization} by $\psit{r_m}{\lambda}$, and integrating over $I_m$ by parts, we obtain
 \begin{align*}
0 &=  \int_{I_{m}} \left((\phit{r_m}{\lambda})' - \lambda \phit{r_m}{\lambda} - \liftinge{r_m}{m}(\phit{r_m}{\lambda} (t_m)) - \lifting{r_m}{m}(1)\right) \psit{r_m}{\lambda} \dd t \\
 &= - \int_{I_{m}} \phit{r_m}{\lambda} (\psit{r_m}{\lambda})' \dd t+ \phit{r_m}{\lambda}(t_{m})\psit{r_m}{\lambda}(t_{m}) - \phit{r_m}{\lambda}(t_{m-1})\psit{r_m}{\lambda}(t_{m-1}) \\
 &\quad- \lambda \int_{I_m} \phit{r_m}{\lambda} \psit{r_m}{\lambda} \dd t - \phit{r_m}{\lambda}(t_{m})\psit{r_m}{\lambda}(t_{m}) - \psit{r_m}{\lambda}(t_{m-1})  \\
 &= - \int_{I_{m}}\phit{r_m}{\lambda} \left\{ (\psit{r_m}{\lambda})'+ \lambda \psit{r_m}{\lambda} + \lifting{r_m}{m}(\psit{r_m}{\lambda}(t_{m-1})) \right\} \dd t - \psit{r_m}{\lambda}(t_{m-1}).
 \end{align*}
Recalling the definition~\eqref{eq:psi-characterization} of the dG fundamental solution, yields
\begin{align*}
0
 &= - \int_{I_{m}} \phit{r_m}{\lambda}\lifting{r_m}{m}(1) \dd t - \psit{r_m}{\lambda}(t_{m-1}) 
 = - \phit{r_m}{\lambda}(t_{m-1}) - \psit{r_m}{\lambda}(t_{m-1}),
 \end{align*}
which is~\eqref{eq:init-val-psi}.
%
\end{proof}

Our next step is to prove that the dG dual solution takes the value of its maximum norm at~$t_{m-1}$.

\begin{proposition}[Stability of $\phit{r_{m}}{\lambda}$]\label{prop:phitimaxi}
Suppose that~$\lambda>0$. It holds
\begin{equation}\label{eq:phitimax}
\Lnorm{\phit{r_m}{\lambda}}{\infty}{I_m} = \lvert \phit{r_m}{\lambda}(t_{m-1}) \rvert,
\end{equation}
and
\begin{equation}\label{eq:phitisandwich}
-1 < \phit{r_m}{\lambda}(t_{m-1}) < 0.
\end{equation}
\end{proposition}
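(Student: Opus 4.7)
The plan is to extract the desired estimates from the defining equation~\eqref{eq:phit-characterization} by testing against carefully chosen polynomials in~$\polr$, and then to combine the resulting identities with Lemma~\ref{lem:phit-characterization} and Lemma~\ref{lem:init-val-psi}.

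To establish the sign inequality $\phit{r_m}{\lambda}(t_{m-1})<0$ from~\eqref{eq:phitisandwich}, I would first test~\eqref{eq:phit-characterization} against $\phit{r_m}{\lambda}$ itself. Integrating the $(\phit{r_m}{\lambda})'$-term by parts and using $\int_{I_m}\lifting{r_m}{m}(1)\phit{r_m}{\lambda}\dd t=\phit{r_m}{\lambda}(t_{m-1})$ together with $\int_{I_m}\liftinge{r_m}{m}(\phit{r_m}{\lambda}(t_m))\phit{r_m}{\lambda}\dd t=\phit{r_m}{\lambda}(t_m)^2$ should yield the energy identity
\begin{equation*}
\phit{r_m}{\lambda}(t_{m-1})=-\tfrac{1}{2}\phit{r_m}{\lambda}(t_m)^2-\tfrac{1}{2}\phit{r_m}{\lambda}(t_{m-1})^2-\lambda\Lnorm{\phit{r_m}{\lambda}}{2}{I_m}^2.
\end{equation*}
Because $\operator{\Psi}$ from the proof of Lemma~\ref{lem:phit-characterization} is an isomorphism and $\lifting{r_m}{m}(1)\neq 0$, we have $\phit{r_m}{\lambda}\not\equiv 0$, so the right-hand side is strictly negative. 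Testing~\eqref{eq:phit-characterization} instead against the constant polynomial $1$ gives
\begin{equation*}
\phit{r_m}{\lambda}(t_{m-1})=-1-\lambda\int_{I_m}\phit{r_m}{\lambda}\dd t,
\end{equation*}
so the complementary bound $\phit{r_m}{\lambda}(t_{m-1})>-1$ is, via Lemma~\ref{lem:init-val-psi}, equivalent to the strict inequality $\psit{r_m}{\lambda}(t_{m-1})<1$. I would obtain the latter by deriving the analogous quadratic energy identity for $\psit{r_m}{\lambda}$, namely
\begin{equation*}
\psit{r_m}{\lambda}(t_{m-1})^2-2\psit{r_m}{\lambda}(t_{m-1})+C=0,\qquad C=\psit{r_m}{\lambda}(t_m)^2+2\lambda\Lnorm{\psit{r_m}{\lambda}}{2}{I_m}^2\in[0,1],
\end{equation*}
and invoking continuity of $\lambda\mapsto\psit{r_m}{\lambda}(t_{m-1})$ on $[0,\infty)$ with the unambiguous boundary value $\psit{r_m}{0}\equiv 1$ to select the smaller root $1-\sqrt{1-C}$ for all $\lambda>0$.

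The hardest step is~\eqref{eq:phitimax}. Here my plan is to exploit the adjoint representation
\begin{equation*}
\gamli(U)(t_{m-1})=-(\phit{r_m}{\lambda},U)_{\fspace{L}^2(I_m)}\qquad\forall U\in\polr,
\end{equation*}
which follows from the formal $\fspace{L}^2$-adjointness of $\gaml{r_m}$ and $-\operator{\Psi}$. Applied to the reproducing kernel $U_{t^*}\in\polr$ of point evaluation at an arbitrary $t^*\in I_m$, this rewrites $\phit{r_m}{\lambda}(t^*)=-\gamli(U_{t^*})(t_{m-1})$, so~\eqref{eq:phitimax} becomes a pointwise stability statement for $\gamli$ at the left endpoint. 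The main obstacle I anticipate is upgrading this $\fspace{L}^2$-identity to a bona fide pointwise bound; my intended route is to exploit the reflection $t\mapsto t_{m-1}+t_m-t$, under which $\lifting{r_m}{m}$ and $\liftinge{r_m}{m}$ exchange roles and the defining equation for $\phit{r_m}{\lambda}$ is transformed into a problem of the same type driven by $\liftinge{r_m}{m}(1)$, enabling a comparison with the dG fundamental solution~$\psit{r_m}{\lambda}$ and finally reducing the pointwise maximality claim to the strict inequality $\psit{r_m}{\lambda}(t_{m-1})<1$ already established.
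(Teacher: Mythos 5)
There are two genuine gaps in your proposal. The first concerns the lower bound $\phit{r_m}{\lambda}(t_{m-1})>-1$. Your quadratic identity for $\psit{r_m}{\lambda}(t_{m-1})$ is correct, but note that $C=\psit{r_m}{\lambda}(t_m)^2+2\lambda\Lnorm{\psit{r_m}{\lambda}}{2}{I_m}^2$ satisfies $1-C=(1-\psit{r_m}{\lambda}(t_{m-1}))^2$ identically, so the statement $C<1$ is \emph{equivalent} to $\psit{r_m}{\lambda}(t_{m-1})\neq 1$: the identity by itself carries no strictness information. Worse, your branch selection by continuity fails exactly at its anchor point, since at $\lambda=0$ one has $C=1$ and the two roots $1\pm\sqrt{1-C}$ coincide; continuity from the boundary value $\psit{r_m}{0}\equiv 1$ therefore cannot distinguish the branches for $\lambda>0$. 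The gap is repairable (if $\psit{r_m}{\lambda}(t_{m-1})=1$, then \eqref{eq:psi-characterization} degenerates to $(\psit{r_m}{\lambda})'+\lambda\psit{r_m}{\lambda}=\lifting{r_m}{m}\bigl(1-\psit{r_m}{\lambda}(t_{m-1})\bigr)=0$, which for $\lambda>0$ forces the polynomial $\psit{r_m}{\lambda}$ to vanish identically, a contradiction; one must then still exclude the branch $\psit{r_m}{\lambda}(t_{m-1})>1$, e.g.\ via the limit $\lambda\to\infty$), but the argument as you state it does not close. By contrast, the paper obtains this bound from $a_0<0$ together with \eqref{eq:a0}.

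The second, more serious gap is that \eqref{eq:phitimax} is not actually proved. Your adjoint identity $\gamli(U)(t_{m-1})=-\int_{I_m}\phit{r_m}{\lambda}\,U\dd t$ for $U\in\polr$ is correct and a nice observation, and the reflection $t\mapsto t_{m-1}+t_m-t$ does exchange $\lifting{r_m}{m}$ and $\liftinge{r_m}{m}$; but evaluating the identity on a reproducing kernel merely restates the value $\phit{r_m}{\lambda}(t^*)$, and the reflection turns \eqref{eq:phit-characterization} into $\gaml{r_m}(\widetilde{\phi})=-\liftinge{r_m}{m}(1)$ for $\widetilde{\phi}=\phit{r_m}{\lambda}(t_{m-1}+t_m-\cdot)$, i.e.\ into a problem of the same type that you have not analyzed. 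In particular, the proposed reduction of \eqref{eq:phitimax} to $\psit{r_m}{\lambda}(t_{m-1})<1$ cannot work: by Lemma~\ref{lem:init-val-psi} that scalar inequality is equivalent to the endpoint bound in \eqref{eq:phitisandwich} and contains no information about where $\lvert\phit{r_m}{\lambda}\rvert$ attains its maximum over $I_m$. The paper's proof of \eqref{eq:phitimax} rests entirely on the sign alternation $\sgn(a_i)=(-1)^{i+1}$ of the Legendre coefficients (Lemma~\ref{lem:coeff-phit}), which via \eqref{eq:Leg1} and $\Lnorm{K^m_i}{\infty}{I_m}=1$ yields $\phit{r_m}{\lambda}(t_{m-1})=-\sum_i\lvert a_i\rvert$ alongside $\Lnorm{\phit{r_m}{\lambda}}{\infty}{I_m}\le\sum_i\lvert a_i\rvert$; that coefficient analysis is the substantive content of the proposition and has no counterpart in your plan. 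Only your first step — the energy identity obtained by testing \eqref{eq:phit-characterization} with $\phit{r_m}{\lambda}$, which is \eqref{eq:phitioneandmo} rearranged and correctly gives $\phit{r_m}{\lambda}(t_{m-1})<0$ — is complete as written.
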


The proof of the above proposition, to be presented later on, is based on some properties of the Legendre expansion of the dG dual solution. More precisely, write
\begin{equation}\label{eq:phiexp}
\phit{r_m}{\lambda} = \sum_{i=0}^{r_m} a_i K^m_i,
\end{equation}
with the Legendre polynomials~$\{K_i^m\}_{i\ge0}$ from~\eqref{eq:Leg1} and~\eqref{eq:Leg2}.

\begin{lemma}\label{lem:coeff-phit}
Let~$\lambda>0$. Then, for the coefficients $a_0,\ldots,a_{r_m}$ in the Legendre expansion~\eqref{eq:phiexp} there hold the recursion formulas
\begin{align}
 a_{0} &= -\frac{1 + \phit{r_m}{\lambda}(t_{m-1})}{k_m\lambda},\label{eq:a0}\\
 a_{1} &= -\frac{3}{\lambda}\left(\frac{2}{k_m}+\lambda\right) a_0,\label{eq:a1} \\
 a_{i} &= (2i+1)\left(\frac{a_ {i-2}}{2i-3} - \frac{2a_{i-1}}{k_m\lambda}\right), \qquad\text{for } 2 \leq i \leq r_{m}.\label{eq:ai}
 \end{align}
Furthermore, we have that $a_{i} \neq 0$, as well as
\begin{equation}\label{eq:signa}
\sgn(a_i) = (-1)^{i+1},
\end{equation} 
for any $i=0,\ldots,r_{m}$.
\end{lemma}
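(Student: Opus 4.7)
The general strategy is to test the defining equation~\eqref{eq:phit-characterization} of $\phit{r_m}{\lambda}$ against the Legendre basis $\{K^m_j\}_{j=0}^{r_m}$ of $\polr$ and read off each $a_j$ via the orthogonality~\eqref{eq:Leg2}. To derive~\eqref{eq:a0}, I would integrate~\eqref{eq:phit-characterization} over $I_m$, i.e.\ test against $K^m_0\equiv 1$: the derivative term collapses to $\phit{r_m}{\lambda}(t_m) - \phit{r_m}{\lambda}(t_{m-1})$, the $\lambda$-term contributes $-k_m\lambda a_0$ by orthogonality, while the defining properties of $\liftinge{r_m}{m}$ and $\lifting{r_m}{m}$ yield $-\phit{r_m}{\lambda}(t_m)$ and $1$. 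The $\phit{r_m}{\lambda}(t_m)$-contributions cancel, and solving for $a_0$ gives~\eqref{eq:a0}. For~\eqref{eq:a1}, I would test against $K^m_1$, use $K^m_1(t_{m-1}) = -1$, $K^m_1(t_m) = 1$ and $(K^m_1)' = 2/k_m$ in conjunction with integration by parts, and then eliminate $\phit{r_m}{\lambda}(t_{m-1})$ via~\eqref{eq:a0}.

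For~\eqref{eq:ai} I would test against $K^m_i$ with $i \geq 2$. Iterating the standard Legendre identity $(2k+1)K^m_k = \tfrac{k_m}{2}\bigl((K^m_{k+1})' - (K^m_{k-1})'\bigr)$ gives
\begin{equation*}
(K^m_i)' = \frac{2}{k_m}\sum_{\substack{0 \leq j < i \\ i-j \text{ odd}}}(2j+1)\,K^m_j,
\end{equation*}
so that integration by parts and orthogonality, after cancellation of the two $\phit{r_m}{\lambda}(t_m)$-terms, produce the equation
\begin{equation*}
 -(-1)^i \phit{r_m}{\lambda}(t_{m-1}) \;-\; 2\!\!\sum_{\substack{0 \leq j < i \\ i-j \text{ odd}}}\!\! a_j \;-\; \frac{k_m\lambda}{2i+1}\,a_i \;=\; (-1)^i.
\end{equation*}
Subtracting the analogous equation for $i-2$ telescopes the sum down to $a_{i-1}$ and cancels the $\phit{r_m}{\lambda}(t_{m-1})$-contributions, leaving exactly~\eqref{eq:ai}.

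For the sign claim, I would first rule out $a_0 = 0$ by contradiction: if $a_0 = 0$, then~\eqref{eq:a1} forces $a_1 = 0$ and~\eqref{eq:ai} inductively forces $a_i = 0$ for all $i$, whence $\phit{r_m}{\lambda} \equiv 0$, contradicting $\operator{\Psi}(\phit{r_m}{\lambda}) = \lifting{r_m}{m}(1) \not\equiv 0$. By~\eqref{eq:a1}, $\sgn(a_1) = -\sgn(a_0) \neq 0$; inductively via~\eqref{eq:ai}, the two contributions $a_{i-2}/(2i-3)$ and $-2a_{i-1}/(k_m\lambda)$ share the common sign $\sgn(a_{i-2}) = -\sgn(a_{i-1})$, so $a_i \neq 0$ and $\sgn(a_i) = (-1)^i \sgn(a_0)$. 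To pin down $\sgn(a_0) = -1$, I would test~\eqref{eq:phit-characterization} against $\phit{r_m}{\lambda}$ itself and integrate by parts, obtaining the energy identity
\begin{equation*}
-\phit{r_m}{\lambda}(t_{m-1}) \;=\; \tfrac{1}{2}\bigl(\phit{r_m}{\lambda}(t_m)^2 + \phit{r_m}{\lambda}(t_{m-1})^2\bigr) \;+\; \lambda\,\Lnorm{\phit{r_m}{\lambda}}{2}{I_m}^{2},
\end{equation*}
whose right-hand side is strictly positive since $\phit{r_m}{\lambda} \not\equiv 0$; thus $\phit{r_m}{\lambda}(t_{m-1}) < 0$. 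Were $a_0 > 0$, the alternation $\sgn(a_i) = (-1)^i$ would entail $\phit{r_m}{\lambda}(t_{m-1}) = \sum_{i=0}^{r_m}(-1)^i a_i = \sum_{i=0}^{r_m}\lvert a_i \rvert > 0$, a contradiction; hence $a_0 < 0$ and~\eqref{eq:signa} follows.

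I expect the main obstacle to lie in the bookkeeping for~\eqref{eq:ai}: one must apply the iterated Legendre derivative identity, correctly organise the resulting parity-restricted sum, and then verify that telescoping the equations for indices $i$ and $i-2$ leaves precisely the three terms claimed. The sign portion is thereafter essentially combinatorial, but it hinges on the observation that the recursions alone leave $\sgn(a_0)$ undetermined, so a separate variational input (the energy identity) is required to break the ambiguity.
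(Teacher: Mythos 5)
Your proposal is correct and arrives at the same recursions and the same sign conclusion as the paper, but by a noticeably different route in both halves. For the recursions, the paper does not test the strong equation~\eqref{eq:phit-characterization} against the Legendre polynomials $K^m_i$; instead it invokes the $\fspace{L}^2$-orthogonality of $\phit{r_m}{\lambda}$ to $\wr$ from Lemma~\ref{lem:phit-characterization}, tests $\int_{I_m}\phit{r_m}{\lambda}(v'+\lambda v)\dd t=0$ with the integrated Legendre polynomials $v=Q^m_j$ from~\eqref{eq:intLeg}, and uses the identities~\eqref{eq:intLegprop}; this yields the three-term relations~\eqref{eq:reca} in one stroke, with no boundary terms, no iterated derivative formula, and no telescoping. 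Your direct testing against $K^m_i$ followed by subtracting the equations for $i$ and $i-2$ produces the same relations (I checked the parity-restricted sum and the cancellation of the $\phit{r_m}{\lambda}(t_m)$- and $\phit{r_m}{\lambda}(t_{m-1})$-terms; it works), but the bookkeeping you flag as the main obstacle is precisely what the orthogonality lemma lets the paper bypass. For the signs, both arguments get the alternation from the recursions and then need one external input to fix the overall sign, and both draw it from the same energy identity obtained by testing against $\phit{r_m}{\lambda}$ (your identity is~\eqref{eq:phitioneandmo} rearranged). The paper fixes the sign at the top: testing against $K^m_{r_m}$ gives~\eqref{eq:arm}, the energy identity gives $|\phit{r_m}{\lambda}(t_m)|\le1$, and hence $\sgn(a_{r_m})=(-1)^{r_m+1}$. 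You fix it at the bottom: the energy identity gives $\phit{r_m}{\lambda}(t_{m-1})<0$, and evaluating the expansion at $t_{m-1}$ rules out $\sgn(a_0)=+1$. Your variant is more economical for this lemma alone, though the paper's detour through~\eqref{eq:arm} and the bound on $|\phit{r_m}{\lambda}(t_m)|$ is not wasted, since both facts are reused in Lemmas~\ref{lem:arm} and~\ref{lem:phiL2}.
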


\begin{proof}
We begin by integrating~\eqref{eq:phit-characterization} over~$I_m$, which yields
\[
\lambda\int_{I_m}\phit{r_m}{\lambda}(t)\dd t=-1-\phit{r_m}{\lambda}(t_{m-1}).
\] 
Then, making use of the expansion~\eqref{eq:phiexp} as well as of the fact that
\[
\int_{I_m}K^m_i\dd t=0\qquad\forall i\ge 1,
\]
we see that
\[
\lambda k_ma_0 = -1-\phit{r_m}{\lambda}(t_{m-1}),
\]
and hence,
\[
a_0= -\frac{1 + \phit{r_m}{\lambda}(t_{m-1})}{k_m\lambda},
\]
which proves~\eqref{eq:a0}. Next, we employ again the integrated Legendre polynomials defined in~\eqref{eq:intLeg}, and notice the following properties, see, e.g., \cite[Eq.~(9)]{SchotzauWihler:10}:
\begin{equation}\label{eq:intLegprop}
\begin{split}
Q^m_0 &= K^m_0 + K^m_1,\qquad
Q^m_i = \frac{1}{2i+1}(K^m_{i+1}-K^m_{i-1}),\quad i\ge 1.
 \end{split}
\end{equation}
Due to Lemma \ref{lem:phit-characterization} we note that
\begin{equation*}
 0 = \int_{I_m}\phit{r_m}{\lambda}(v' + \lambda v)\dd t \qquad \forall v \in \polrz.
\end{equation*}
Thus, applying the expansion~\eqref{eq:phiexp}, and choosing $v:=Q^m_{j}$, we obtain 
\[
0=\sum_{i=0}^{r_m}a_i\int_{I_m}K^m_i\left(\frac{2}{k_m}K^m_j+\lambda Q^m_j\right)\dd t,\qquad j=0,\ldots,r_{m-1}.
\]
Involving~\eqref{eq:intLegprop}, and using the orthogonality property~\eqref{eq:Leg2} of the Legendre polynomials, we arrive at
\begin{equation}\label{eq:reca}
 \begin{split}
  0 &= \left(\frac{2}{k_m}+\lambda\right) a_{0} + \frac{\lambda}{3} a_1, \\
  0 &= -\frac{\lambda}{2j-1}a_{j-1}+\frac{2}{k_m}a_j + \frac{\lambda}{2j+3}a_{j+1}, \qquad 1 \leq j \leq r_{m-1}.
 \end{split}
\end{equation}
Rewriting these equalities yields the asserted recursion relations~\eqref{eq:a1} and~\eqref{eq:ai}. Here, we note that~$a_0\neq 0$ since otherwise all coefficients would be zero, which, in turn, would lead to~$\phit{r_m}{\lambda} \equiv 0$. Moreover, the recursion formulas~\eqref{eq:reca} immediately show that the coefficients~$a_j$, $j=1,\ldots,r_m$, never vanish, and have alternating signs.

It remains to show the sign alternation property~\eqref{eq:signa}. To this end, we test~\eqref{eq:phit-characterization} by the Legendre polynomial~$K^m_{r_m}$, and integrate over $I_m$. Then, observing that $(\phit{r_m}{\lambda})'$ is $\fspace{L}^2$-orthogonal to~$K^m_{r_m}$ (because it has degree~$r_m-1$), and applying the properties~\eqref{eq:Leg1} and~\eqref{eq:Leg2}, leads to
\begin{align*}
 0 
   &= -\frac{k_m\lambda }{2r_m +1}a_{r_m}- \phit{r_m}{\lambda}(t_m) - (-1)^{r_m},
\end{align*}
and therefore,
\begin{equation}\label{eq:arm}
 a_{r_m} = -\frac{2r_m + 1}{k_m \lambda}\left( \phit{r_m}{\lambda}(t_m)+(-1)^{r_m}\right).
\end{equation}
Next, we test \eqref{eq:phit-characterization} by~$\phit{r_m}{\lambda}$, and integrate over~$I_m$. A brief calculation reveals that
\begin{equation}\label{eq:phitioneandmo} 
2 \lambda \Lnorm{\phit{r_m}{\lambda}}{2}{I_m}^2 + \lvert \phit{r_m}{\lambda}(t_m) \rvert^{2} = -\phit{r_m}{\lambda}(t_{m-1})(2 + \phit{r_m}{\lambda}(t_{m-1})).
\end{equation}
Since the left-hand side of \eqref{eq:phitioneandmo} consists only of non-negative terms, it follows that~$\phit{r_m}{\lambda}(t_{m-1}) \in [-2,0]$. In addition, we note that $\max_{x\in[-2,0]}\left[-x(2+x)\right]=1$. Hence, the right-hand side of~\eqref{eq:phitioneandmo}, and thereby also the left-hand side, are both bounded by~1. This implies, in particular, that~$|\phit{r_m}{\lambda}(t_m)|\le1$. Therefore, from~\eqref{eq:arm}, and because~$a_{r_m}\neq 0$, we infer that $\sgn(a_{r_m})=(-1)^{r_m +1}$. Since the sign of the coefficients~$a_j$ are alternating, we necessarily arrive at
\[
\sgn(a_j)=\sgn(a_{r_m})(-1)^{r_m-j}=(-1)^{2r_m+1 -j}=(-1)^{j+1},
\]
for~$0\le j\le r_m$.
\end{proof}

\begin{proof}[Proof of Proposition~\ref{prop:phitimaxi}]
We apply the Legendre expansion~\eqref{eq:phiexp} of~$\phit{r_m}{\lambda}$. Then, recalling~\eqref{eq:signa}, and invoking~\eqref{eq:Leg1}, we deduce that
\begin{equation}\label{eq:phitatmone}
\phit{r_m}{\lambda}(t_{m-1})  = -\sum_{i=0}^{r_m}\lvert a_i \lvert<0.
\end{equation}
This is the upper bound in~\eqref{eq:phitisandwich}. In addition, noticing the fact that \begin{equation}\label{eq:Leg3}
\Lnorm{K_i^m}{\infty}{I_m} = 1,
\end{equation}
we infer
\begin{align}\label{eq:max-phit}
\Lnorm{\phit{r_m}{\lambda}}{\infty}{I_m} \leq \sum_{i=0}^{r_m} \lvert a_{i} \rvert \Lnorm{K^m_i}{\infty}{I_m} = \sum_{i=0}^{r_m} \lvert a_{i} \rvert.
\end{align}
Combining~\eqref{eq:phitatmone} and~\eqref{eq:max-phit}, we arrive at~\eqref{eq:phitimax}. Finally, the lower bound in~\eqref{eq:phitisandwich} follows from the fact that~$a_0<0$, cf.~\eqref{eq:signa}, and from~\eqref{eq:a0}. 
\end{proof}

The ensuing lemma provides further properties of the dG dual solution which will be crucial in the stability analysis below.

\begin{lemma}\label{lem:arm}
For~$\lambda>0$, the coefficient $a_{r_m}$ in the Legendre expansion of~$\phit{r_m}{\lambda}$, cf.~\eqref{eq:phiexp}, satisfies the bound
\[
\lvert a_{r_m} \rvert\le \left(1+\frac{\lambda k_m}{2(2r_m +1)}\right)^{-1}.
\]
\end{lemma}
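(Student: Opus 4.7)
The plan is to combine three ingredients already available in the excerpt: the Parseval-type identity coming from Legendre orthogonality~\eqref{eq:Leg2}, the quadratic identity~\eqref{eq:phitioneandmo}, and the explicit relation~\eqref{eq:arm} between $a_{r_m}$ and $\phit{r_m}{\lambda}(t_m)$. These together force a quadratic inequality in $a_{r_m}$, from which the desired bound drops out immediately once we invoke the sign information~\eqref{eq:signa}.

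In detail, I would first use the orthogonality~\eqref{eq:Leg2} of the Legendre polynomials applied to the expansion~\eqref{eq:phiexp} to obtain the trivial lower bound
\[
\Lnorm{\phit{r_m}{\lambda}}{2}{I_m}^2 \;=\; \sum_{i=0}^{r_m} \frac{k_m}{2i+1}\, a_i^2 \;\geq\; \frac{k_m}{2r_m+1}\, a_{r_m}^2.
\]
Next, I would rewrite the right-hand side of~\eqref{eq:phitioneandmo} as $1 - (1+\phit{r_m}{\lambda}(t_{m-1}))^2 \leq 1$, so that
\[
\frac{2\lambda k_m}{2r_m+1}\, a_{r_m}^2 + \phit{r_m}{\lambda}(t_m)^2 \;\leq\; 1.
\]

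The core step is then to eliminate $\phit{r_m}{\lambda}(t_m)$ using~\eqref{eq:arm}, which gives
\[
\phit{r_m}{\lambda}(t_m)^2 \;=\; 1 \,+\, \frac{2(-1)^{r_m}k_m\lambda}{2r_m+1}\, a_{r_m} \,+\, \left(\frac{k_m\lambda}{2r_m+1}\right)^{2} a_{r_m}^2.
\]
Substituting and cancelling the $1$'s leaves
\[
\left(2 + \frac{k_m\lambda}{2r_m+1}\right)\frac{k_m\lambda}{2r_m+1}\, a_{r_m}^2 \,+\, \frac{2(-1)^{r_m}k_m\lambda}{2r_m+1}\, a_{r_m} \;\leq\; 0.
\]
Finally, I would divide by $\frac{k_m\lambda}{2r_m+1}|a_{r_m}|>0$ and apply $\sgn(a_{r_m})=(-1)^{r_m+1}$ from~\eqref{eq:signa}, i.e., $(-1)^{r_m}a_{r_m}=-|a_{r_m}|$, to convert the inequality into
\[
\left(2 + \frac{k_m\lambda}{2r_m+1}\right)|a_{r_m}| \;\leq\; 2,
\]
which rearranges into exactly the asserted bound.

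I do not expect any real obstacle here; the only thing that requires some care is recognizing the right combination of~\eqref{eq:phitioneandmo} with the single-coefficient Bessel-type estimate, so that~\eqref{eq:arm} can be used to produce a quadratic in $a_{r_m}$ alone. Everything else is algebraic manipulation, and the sign information from Lemma~\ref{lem:coeff-phit} takes care of turning a quadratic inequality into a one-sided bound on $|a_{r_m}|$.
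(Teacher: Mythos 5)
Your proof is correct, and it takes a genuinely different route from the one in the paper. The paper stays entirely inside the recursion formulas of Lemma~\ref{lem:coeff-phit}: using the sign alternation \eqref{eq:signa} it converts \eqref{eq:ai} into the relation \eqref{eq:alt-coeff} between the moduli $\lvert a_i\rvert$, inserts this into the identity $-\phit{r_m}{\lambda}(t_{m-1})=\sum_{i=0}^{r_m}\lvert a_i\rvert$ from \eqref{eq:phitatmone}, telescopes, and eliminates $\lvert a_0\rvert$ and $\lvert a_1\rvert$ via \eqref{eq:a0} and \eqref{eq:a1}; this yields the \emph{exact} identity $1=\frac{\lambda k_m}{2(2r_m-1)}\lvert a_{r_m-1}\rvert+\bigl(1+\frac{\lambda k_m}{2(2r_m+1)}\bigr)\lvert a_{r_m}\rvert$, and the bound follows by discarding the first (nonnegative) term. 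You instead combine the energy identity \eqref{eq:phitioneandmo} with the one-term Bessel bound $\Lnorm{\phit{r_m}{\lambda}}{2}{I_m}^2\ge\frac{k_m}{2r_m+1}a_{r_m}^2$ coming from \eqref{eq:Leg2}, and eliminate $\phit{r_m}{\lambda}(t_m)$ through \eqref{eq:arm}; the resulting quadratic inequality in $a_{r_m}$ alone, together with $\sgn(a_{r_m})=(-1)^{r_m+1}$ from \eqref{eq:signa} (in fact $(-1)^{r_m+1}\sgn(a_{r_m})\le 1$ already suffices, and the case $a_{r_m}=0$ is trivial), gives precisely the same constant. All three ingredients you use are established in the proof of Lemma~\ref{lem:coeff-phit}, and the paper itself reuses \eqref{eq:phitioneandmo} and \eqref{eq:arm} in Lemma~\ref{lem:phiL2}, so nothing circular is involved. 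The trade-off: the paper's telescoping argument delivers a sharper exact relation coupling $\lvert a_{r_m-1}\rvert$ and $\lvert a_{r_m}\rvert$ and uses only linear information about the coefficients, whereas your argument is shorter, bypasses the telescoping and the formula for $\lvert a_1\rvert$, and needs the sign of the top coefficient only --- at the cost of discarding the lower-order mass $\sum_{i<r_m}\frac{k_m}{2i+1}a_i^2$, which happens to lose nothing for this particular bound.
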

\begin{proof}
We use the formulas for the Legendre coefficients~$a_0,\ldots a_{r_m}$ of~$\phit{r_m}{\lambda}$ from Lemma~\ref{lem:coeff-phit}. Specifically, from~\eqref{eq:a0} and~\eqref{eq:signa} it follows that
\begin{equation}\label{eq:|a0|}
\lambda k_m|a_0|=1+\phit{r_m}{\lambda}(t_{m-1}).
\end{equation}
Moreover, taking moduli in~\eqref{eq:a1}, we deduce that
\begin{equation}\label{eq:arm-ineq2}
\lvert a_1 \rvert = \frac{3(2 + \lambda k_m )}{\lambda k_m} \lvert a_0 \rvert.
\end{equation}
In addition, rearranging~\eqref{eq:ai}, we have
\[
a_i=\frac{\lambda k_m}{2}\left(\frac{a_{i-1}}{2i-1}-\frac{a_{i+1}}{2i+3}\right),
\qquad 1\le i\le r_m-1,
\]
which, involving again~\eqref{eq:signa}, leads to
 \begin{equation}\label{eq:alt-coeff}
 \lvert a_{i} \rvert = \frac{\lambda k_m}{2}\left(\frac{\lvert a_{i+1} \rvert}{2i+3} - \frac{\lvert a_{i-1} \rvert}{2i-1} \right), \qquad 1 \leq i \leq r_m-1.
 \end{equation}
Inserting~\eqref{eq:alt-coeff} into~\eqref{eq:phitatmone} implies
\begin{align*}
 -\phit{r_m}{\lambda}(t_{m-1}) 
	&= \lvert a_0 \rvert + \lvert a_{r_m} \rvert + \frac{\lambda k_m}{2} \sum_{i=1}^{r_m -1} \left(\frac{\lvert a_{i+1} \rvert}{2i+3} - \frac{\lvert a_{i-1} \rvert}{2i-1} \right).
\end{align*}
Observing the telescope sum on the right-hand side results in
\begin{align*}
 -&\phit{r_m}{\lambda}(t_{m-1}) \\
& =  \left(1-\frac{\lambda k_m}{2}\right)\lvert a_0 \rvert 
 - \frac{\lambda k_m}{6}\lvert a_1 \rvert 
 + \frac{\lambda k_m}{2(2r_m -1)}\lvert a_{r_m - 1} \rvert 
 + \left(1+\frac{\lambda k_m}{2(2r_m +1)}\right)\lvert a_{r_m} \rvert.
\end{align*}
Applying~\eqref{eq:arm-ineq2}, we note that
\[
 -\phit{r_m}{\lambda}(t_{m-1}) 
 =  -\lambda k_m|a_0|
 + \frac{\lambda k_m}{2(2r_m -1)}\lvert a_{r_m - 1} \rvert 
 + \left(1+\frac{\lambda k_m}{2(2r_m +1)}\right)\lvert a_{r_m} \rvert.
\]
Making use of~\eqref{eq:|a0|}, we arrive at
\[
1= \frac{\lambda k_m}{2(2r_m -1)}\lvert a_{r_m - 1} \rvert 
 + \left(1+\frac{\lambda k_m}{2(2r_m +1)}\right)\lvert a_{r_m} \rvert,
 \]
which yields the bound
\[
1\ge \left(1+\frac{\lambda k_m}{2(2r_m +1)}\right)\lvert a_{r_m} \rvert.
\]
This completes the proof.
\end{proof}

\begin{lemma}\label{lem:phiL2}
Let~$\lambda>0$. For the dG dual solution from~\eqref{eq:phit-characterization} there holds
\begin{equation*}
\Lnorm{\phit{r_m}{\lambda}}{2}{I_m}
\le \Cstab k_m^{\nicefrac12},
\end{equation*}
where, for~$r\in\mathbb{N}_0$ and~$\varrho>0$, we let
\begin{equation}\label{eq:Cstab}
\Cstabdef:=\sqrtb{\frac{3}{2(2r + 1) + \varrho}}.
\end{equation}
In particular, $\Lnorm{\phit{r_m}{\lambda}}{2}{I_m}\to0$, as~$r_m\to\infty$, uniformly with respect to~$\lambda$.
\end{lemma}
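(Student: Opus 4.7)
The strategy is to expand $\phi:=\phit{r_m}{\lambda}$ in the Legendre basis and reduce the bound to Lemma~\ref{lem:arm}. By~\eqref{eq:phiexp} and the orthogonality~\eqref{eq:Leg2}, we have $\Lnorm{\phi}{2}{I_m}^2 = k_m \sum_{i=0}^{r_m} a_i^2/(2i+1)$. Setting $b_i := |a_i|/(2i+1) \ge 0$, this identity becomes $\Lnorm{\phi}{2}{I_m}^2/k_m = \sum_{i=0}^{r_m}|a_i|b_i$.

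Assuming $r_m \ge 1$, I would insert the recurrence~\eqref{eq:alt-coeff}, rewritten as $|a_i| = \tfrac{\lambda k_m}{2}(b_{i+1}-b_{i-1})$, into the middle indices $1 \le i \le r_m - 1$. The resulting sum $\sum_{i=1}^{r_m-1} b_i (b_{i+1} - b_{i-1})$ telescopes to $b_{r_m-1}b_{r_m} - b_0 b_1$, and~\eqref{eq:arm-ineq2} gives $\tfrac{\lambda k_m}{2} b_0 b_1 = \bigl(1+\tfrac{\lambda k_m}{2}\bigr) b_0^2$. Combining this with the endpoint contributions $|a_0|b_0 = b_0^2$ and $|a_{r_m}|b_{r_m} = (2r_m+1) b_{r_m}^2$ produces the identity
\[
\frac{\Lnorm{\phi}{2}{I_m}^2}{k_m} = (2r_m+1)b_{r_m}^2 + \tfrac{\lambda k_m}{2} b_{r_m-1}b_{r_m} - \tfrac{\lambda k_m}{2} b_0^2.
\]

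The normalization relation $1 = \tfrac{\lambda k_m}{2} b_{r_m-1} + \bigl((2r_m+1) + \tfrac{\lambda k_m}{2}\bigr) b_{r_m}$, which is established near the end of the proof of Lemma~\ref{lem:arm}, permits us to eliminate $b_{r_m-1}$ from the cross term, collapsing the identity to
\[
\frac{\Lnorm{\phi}{2}{I_m}^2}{k_m} = b_{r_m} - \tfrac{\lambda k_m}{2}\bigl(b_{r_m}^2 + b_0^2\bigr) \le b_{r_m}.
\]
Lemma~\ref{lem:arm} then yields $b_{r_m} = |a_{r_m}|/(2r_m+1) \le 2/(2(2r_m+1)+\lambda k_m)$, so that $\Lnorm{\phi}{2}{I_m}^2 \le 2 k_m/(2(2r_m+1)+\lambda k_m) \le \Cstab^2 k_m$ (in fact the argument even saves a factor of~$\tfrac{2}{3}$ against the claimed constant). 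The degenerate case $r_m=0$ is handled separately by the explicit formula $|a_0| = (1+\lambda k_m)^{-1}$, from which the desired bound follows by a one-line comparison. I anticipate the telescoping manipulation together with the $b_0 b_1$-simplification via~\eqref{eq:arm-ineq2} to be the main technical step; once the compact identity for $\Lnorm{\phi}{2}{I_m}^2/k_m$ is in place, the substitution via the normalization relation and the concluding application of Lemma~\ref{lem:arm} are immediate.
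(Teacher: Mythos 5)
Your proof is correct, but it takes a genuinely different route from the paper. The paper tests the defining equation~\eqref{eq:phit-characterization} with $\phit{r_m}{\lambda}$ itself to obtain the energy identity~\eqref{eq:phitioneandmo}, then controls the two boundary values $\phit{r_m}{\lambda}(t_{m-1})$ and $\phit{r_m}{\lambda}(t_m)$ via~\eqref{eq:arm}, \eqref{eq:phim2} and Proposition~\ref{prop:phitimaxi}, and finally divides by $2\lambda$ and invokes Lemma~\ref{lem:arm}. You instead stay entirely at the level of the Legendre coefficients: Parseval via~\eqref{eq:Leg2}, telescoping of the three-term recursion~\eqref{eq:alt-coeff}, and elimination of the cross term through the normalization identity
\[
1= \frac{\lambda k_m}{2(2r_m -1)}\lvert a_{r_m - 1} \rvert
 + \Bigl(1+\frac{\lambda k_m}{2(2r_m +1)}\Bigr)\lvert a_{r_m} \rvert ,
\]
which indeed appears (and is valid, including for $r_m=1$, where the endpoint terms overlap consistently) at the end of the proof of Lemma~\ref{lem:arm}. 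I checked the telescoping, the identity $\tfrac{\lambda k_m}{2}b_0b_1=(1+\tfrac{\lambda k_m}{2})b_0^2$, the resulting compact formula $\Lnorm{\phit{r_m}{\lambda}}{2}{I_m}^2/k_m=b_{r_m}-\tfrac{\lambda k_m}{2}(b_{r_m}^2+b_0^2)$, and the $r_m=0$ case with $|a_0|=(1+\lambda k_m)^{-1}$; all are correct. What your approach buys: it avoids the division by $\lambda$, it produces an exact identity for $\Lnorm{\phit{r_m}{\lambda}}{2}{I_m}^2$ rather than a chain of one-sided estimates, and it yields the sharper constant $\bigl(2/(2(2r_m+1)+\lambda k_m)\bigr)^{\nicefrac12}k_m^{\nicefrac12}$ in place of $\Cstab k_m^{\nicefrac12}$. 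The only stylistic caveats are that you rely on an identity from \emph{inside} the proof of Lemma~\ref{lem:arm} rather than on its statement, and that you need the separate (easy) treatment of $r_m=0$, whereas the paper's energy argument covers all degrees uniformly.
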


\begin{proof}
Recalling~\eqref{eq:arm} we have that
\begin{equation}\label{eq:phim}
\left|\phit{r_m}{\lambda}(t_m) - (-1)^{r_m+1}\right|
=\frac{k_m\lambda }{2r_m +1}|a_{r_m}|.
\end{equation}
Moreover, due to Proposition \ref{prop:phitimaxi}, we notice that
\[
0<1+\phit{r_m}{\lambda}(t_{m-1})
=1-\Lnorm{\phit{r_m}{\lambda}}{\infty}{I_m}
\le 1-\left|\phit{r_m}{\lambda}(t_{m})\right|
\le\left|(-1)^{r_m+1}-\phit{r_m}{\lambda}(t_{m})\right|.
\]
Hence,
\begin{equation}\label{eq:phim2}
0<1+\phit{r_m}{\lambda}(t_{m-1})\le\frac{k_m\lambda }{2r_m +1}|a_{r_m}|.
\end{equation}
From~\eqref{eq:phitioneandmo}, we recall that
\begin{equation}\label{eq:phitL2}
2 \lambda \Lnorm{\phit{r_m}{\lambda}}{2}{I_m}^2 =- \lvert \phit{r_m}{\lambda}(t_m) \rvert^{2}  -\phit{r_m}{\lambda}(t_{m-1})(2 + \phit{r_m}{\lambda}(t_{m-1})).
\end{equation}
We estimate the terms on the right-hand side of the above identity separately. Firstly,
\begin{align*}
|\phit{r_m}{\lambda}(t_m)|^2
&=\left|\phit{r_m}{\lambda}(t_m)-(-1)^{r_m+1}\right|^2+2(-1)^{r_m+1}\phit{r_m}{\lambda}(t_m)-1\\
&\ge 1+2(-1)^{r_m+1}\left(-(-1)^{r_m+1}+\phit{r_m}{\lambda}(t_m)\right)\\
&\ge 1-2\left|\phit{r_m}{\lambda}(t_m)-(-1)^{r_m+1}\right|,
\end{align*}
and thus, upon exploiting~\eqref{eq:phim},
\[
-|\phit{r_m}{\lambda}(t_m)|^2\le -1+\frac{2k_m\lambda}{2r_m+1}|a_{r_m}|.
\]
Next, with~\eqref{eq:phim2}, it follows that
\[
2+\phit{r_m}{\lambda}(t_{m-1})\le 1+\frac{k_m\lambda}{2r_m+1}|a_{r_m}|.
\]
Inserting these estimates into~\eqref{eq:phitL2}, and recalling the fact that there holds $0<-\phit{r_m}{\lambda}(t_{m-1})<1$, cf.~\eqref{eq:phitisandwich}, we conclude that
\[
2\lambda\Lnorm{\phit{r_m}{\lambda}}{2}{I_m}^2
\le \frac{3k_m\lambda}{2r_m+1}|a_{r_m}|.
\]
Finally, employing Lemma~\ref{lem:arm}, results in
\[
2\lambda\Lnorm{\phit{r_m}{\lambda}}{2}{I_m}^2
\le \frac{6k_m\lambda}{2(2r_m + 1) + k_m \lambda},
\]
and dividing by~$2\lambda$ completes the proof.
\end{proof}

\begin{remark}
For~$\lambda=0$ it is fairly elementary to verify that
\begin{equation}\label{eq:phit0}
\phit{r_m}{0} =(-1)^{r_m + 1} K^m_{r_m}
\end{equation} 
in~\eqref{eq:phit-characterization}, where~$K^m_{r_m}$ is the Legendre polynomial of degree~$r_m$ on~$I_m$. Therefore, revisiting~\eqref{eq:Leg2}, we observe that
\[
\Lnorm{\phit{r_m}{0}}{2}{I_m}
= \sqrtb{\frac{k_m}{2r_m+1}},
\]
which slightly improves the estimate from Lemma~\ref{lem:phiL2} above.
\end{remark}

The following result is the analog of Proposition~\ref{prop:phitimaxi} for the dG fundamental solution.

\begin{proposition}[Stability of $\psit{r_{m}}{\lambda}$]\label{prop:stab-psi}
Let~$\lambda>0$, and~$1\le m\le M$. For the dG fundamental solution from~\eqref{eq:phit-characterization} the identities
\begin{equation}\label{eq:psitimax}
 \Lnorm{\psit{r_{m}}{\lambda}}{\infty}{I_m} = \psit{r_{m}}{\lambda}(t_{m-1}),
\end{equation}
and
\begin{equation}\label{eq:psiti-der-max}
 \Lnorm{(\psit{r_m}{\lambda})'}{\infty}{I_m} = - (\psit{r_m}{\lambda})'(t_{m-1})
\end{equation}
hold true.
\end{proposition}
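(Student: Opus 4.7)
The plan is to proceed in analogy with Proposition~\ref{prop:phitimaxi}: expand~$\psit{r_m}{\lambda}$ in the Legendre basis $\{K_i^m\}_{i=0}^{r_m}$ as $\psit{r_m}{\lambda} = \sum_{i=0}^{r_m} b_i K_i^m$ and to establish the sign alternation $\sgn(b_i) = (-1)^i$. Once this is in hand, $K_i^m(t_{m-1}) = (-1)^i$ together with $\Lnorm{K_i^m}{\infty}{I_m} = 1$ yield
\[
\psit{r_m}{\lambda}(t_{m-1}) = \sum_{i=0}^{r_m}(-1)^i b_i = \sum_{i=0}^{r_m}|b_i| \ge \Lnorm{\psit{r_m}{\lambda}}{\infty}{I_m} \ge \psit{r_m}{\lambda}(t_{m-1}),
\]
and~\eqref{eq:psitimax} follows immediately.

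To set up a recursion for the coefficients, I would first recast~\eqref{eq:psi-characterization} in pointwise form as
\[
(\psit{r_m}{\lambda})' + \lambda\,\psit{r_m}{\lambda} = c\,\lifting{r_m}{m}(1) \quad\text{on } I_m, \qquad c := 1 - \psit{r_m}{\lambda}(t_{m-1}).
\]
By Lemma~\ref{lem:init-val-psi} and Proposition~\ref{prop:phitimaxi}, $\psit{r_m}{\lambda}(t_{m-1}) = -\phit{r_m}{\lambda}(t_{m-1}) \in (0,1)$, so $c\in(0,1)$. Inserting~\eqref{eq:liftingstrong} and the classical Legendre derivative identity
\[
(K_i^m)'(t) = \frac{2}{k_m}\sum_{\substack{j<i \\ i-j\text{ odd}}}(2j+1)\,K_j^m(t)
\]
into this equation and matching Legendre coefficients produces the triangular system
\[
\lambda b_{r_m} = \frac{c\,(-1)^{r_m}(2r_m+1)}{k_m},\qquad b_j = \frac{c\,(-1)^j(2j+1)}{k_m\lambda} - \frac{2(2j+1)}{k_m\lambda}\sum_{\substack{i>j \\ i-j\text{ odd}}}b_i,
\]
for $0\le j\le r_m-1$. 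A backward induction then delivers $\sgn(b_j) = (-1)^j$: the base case $j=r_m$ is immediate from $c>0$, while the induction step uses that $i-j$ odd forces $(-1)^i = (-1)^{j+1}$, so the inner sum equals $(-1)^{j+1}\sum_{i>j,\,i-j\text{ odd}}|b_i|$ and hence
\[
b_j = \frac{(-1)^j(2j+1)}{k_m\lambda}\left[c + 2\!\sum_{\substack{i>j \\ i-j\text{ odd}}}\!|b_i|\right],
\]
which has the required sign. This proves~\eqref{eq:psitimax}.

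For~\eqref{eq:psiti-der-max}, I would expand $(\psit{r_m}{\lambda})' = \sum_{j=0}^{r_m-1}\beta_j K_j^m$ and read off from the Legendre derivative identity
\[
\beta_j = \frac{2(2j+1)}{k_m}\sum_{\substack{i>j \\ i-j\text{ odd}}}b_i = \frac{2(2j+1)(-1)^{j+1}}{k_m}\sum_{\substack{i>j \\ i-j\text{ odd}}}|b_i|,
\]
so that $\sgn(\beta_j) = (-1)^{j+1}$ (the inner sum is nonempty for every $j\le r_m-1$, and all $|b_i|>0$). Consequently, $(\psit{r_m}{\lambda})'(t_{m-1}) = -\sum_{j=0}^{r_m-1}|\beta_j|$, while $\Lnorm{(\psit{r_m}{\lambda})'}{\infty}{I_m} \le \sum_j|\beta_j|$, yielding~\eqref{eq:psiti-der-max}. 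The principal obstacle is the backward induction establishing the sign alternation; its success hinges on the strict positivity of~$c$, which is inherited from Proposition~\ref{prop:phitimaxi} via Lemma~\ref{lem:init-val-psi}.
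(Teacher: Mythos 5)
Your argument is correct, and it takes a genuinely different route from the paper. The paper expands the \emph{derivative} $(\psit{r_m}{\lambda})'$ in Legendre polynomials, integrates to recover $\psit{r_m}{\lambda}$ via the integrated Legendre polynomials, and matches coefficients in~\eqref{eq:psi-characterization} to obtain the \emph{tridiagonal} system~\eqref{eq:coeff-der-psi}; a backward sweep through that system (driven by $\ez>0$, treating $r_m$ even and odd separately, and assuming $r_m\ge4$ for presentation) gives the sign alternation of the derivative coefficients, hence~\eqref{eq:psiti-der-max} first, and~\eqref{eq:psitimax} is then deduced by evaluating the pointwise equation at $t_{m-1}$ and comparing with the $\fspace{L}^\infty$-bound it implies. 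You instead expand $\psit{r_m}{\lambda}$ itself and use the classical Legendre derivative identity, which yields a strictly \emph{triangular} system: each $b_j$ is an explicit positive multiple of $(-1)^j$ times $c+2\sum_{i>j,\,i-j\ \mathrm{odd}}|b_i|$, so the sign alternation (and non-vanishing) of the $b_j$ follows by a one-line backward induction from $c=\ez>0$, with no case distinction on the parity of $r_m$ and no restriction on its size; \eqref{eq:psitimax} then comes out directly, and~\eqref{eq:psiti-der-max} falls out as a corollary since the $\beta_j$ are explicit signed combinations of the $|b_i|$. Both proofs rest on the same prerequisites (Lemma~\ref{lem:init-val-psi} and Proposition~\ref{prop:phitimaxi} to guarantee $c\in(0,1)$), so there is no circularity; your version is arguably cleaner in that the triangular structure makes the induction transparent and handles all degrees uniformly, whereas the paper's tridiagonal recursion requires tracking signs through three-term relations. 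The only points worth making explicit in a polished write-up are the (standard) scaled derivative identity for the $\hk{i}$ and the observation that the bracket $c+2\sum|b_i|$ is strictly positive, which you already note.
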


\begin{proof}
For simplicity of presentation, we suppose that~$r_m\ge 4$ (the cases~$0\le r_m\le 3$ can be verified directly). We show~\eqref{eq:psiti-der-max} first. For this purpose,
let us expand~$(\psit{r_{m}}{\lambda})'$ in a Legendre series, i.e.,
\begin{equation}\label{eq:expand1}
(\psit{r_{m}}{\lambda})' = \sum_{i=0}^{r_m - 1} b_i \hk{i},
\end{equation}
with coefficients~$b_0,\ldots b_{r_m-1}$. Recalling~\eqref{eq:intLeg}, and using~\eqref{eq:intLegprop}, for~$t\in I_m$, we have
 \begin{align}
\psit{r_{m}}{\lambda}(t) 
&= \psit{r_{m}}{\lambda}(t_{m-1})+\int_{t_{m-1}}^{t} (\psit{r_{m}}{\lambda})' \dd s \nonumber\\
&= \psit{r_{m}}{\lambda}(t_{m-1})+ \frac{k_m}{2}\sum_{i=0}^{r_m - 1} b_i Q^m_i (s) \dd s \nonumber \\
&= \psit{r_{m}}{\lambda}(t_{m-1})+\frac{k_m}{2}\left(b_{0}(\hk{0} + \hk{1}) + \sum_{i=1}^{r_m-1}  \frac{b_{i}}{2i+1}(\hk{i+1}-\hk{i-1})\right).\label{eq:expand2}
 \end{align}
Note that~$\hk{0}\equiv1$. Then, inserting~\eqref{eq:expand1} and~\eqref{eq:expand2} into~\eqref{eq:psi-characterization}, using the representation~\eqref{eq:liftingstrong} of the lifting operator, and comparing coefficients, leads to the equations
\begin{equation}\label{eq:coeff-der-psi}
\begin{split}
\lambda\psit{r_{m}}{\lambda}(t_{m-1})+\left(1+\frac{\lambda k_m}{2}\right)b_0 - \frac{\lambda k_m}{6}b_1 &=\frac{1}{k_m}\ez\\
\frac{\lambda k_m}{2} b_0 + b_1 - \frac{\lambda k_m}{10}b_2 &=-\frac{3}{k_m}\ez\\
\frac{\lambda k_m}{2(2i-1)}b_{i-1} + b_i - \frac{\lambda k_m}{2(2i+3)}b_{i+1}&=\frac{1}{k_m}(-1)^i(2i+1)\ez \quad (2\le i\le r_m-2)\\
\frac{\lambda k_m}{2(2r_m-3)} b_{r_m-2} + b_{r_m-1}&=\frac{1}{k_m}(-1)^{r_m-1}(2r_m-1)\ez \\
\frac{\lambda k_m}{2(2r_m-1)}b_{r_m-1}&=\frac{1}{k_m}(-1)^{r_m}(2r_m+1)\ez.
\end{split}
\end{equation}
Here, we denote by~$\ez=1-\psit{r_m}{\lambda}(t_{m-1})$ the error between the initial values of~$\psi$ from~\eqref{eq:slODE} and its dG approximation~$\psit{r_m}{\lambda}$. 
In order to show~\eqref{eq:psiti-der-max}, we first illustrate that the signs of the coefficients $b_0,\ldots,b_{r_m-1}$ are alternating. We focus on the case where $r_m$ is even. Let us first observe, by~\eqref{eq:init-val-psi} and~\eqref{eq:phitisandwich}, that
\begin{equation}\label{eq:psisandwich}
0<\psit{r_m}{\lambda}(t_{m-1})<1.
\end{equation} 
Rewriting the last equation in~\eqref{eq:coeff-der-psi}, we have
\begin{equation*}
b_{r_{m}-1} = \frac{(-1)^{r_m}}{2\lambda k_{m}^2}(4r_m^2-1)\ez.
\end{equation*}
Using~\eqref{eq:psisandwich}, we notice that
\begin{equation}\label{eq:ez+}
\ez>0,
\end{equation}
and because~$r_m$ is even, we arrive at~$b_{r_m-1}>0$. Then, from the second last equation in~\eqref{eq:coeff-der-psi}, we infer
\begin{equation*}
b_{r_m-2} = -\frac{2}{\lambda k_m}(2r_m-3) b_{r_m-1} + \frac{2(-1)^{r_m-1}}{\lambda k_{m}^2}(2r_m-3) (2r_m-1)\ez < 0.
\end{equation*}
Analogously, the third equation in~\eqref{eq:coeff-der-psi}, with~$i=r_m-2$, implies that
\begin{align*}
b_{r_{m}-3} &= - \frac{2}{\lambda k_m}(2r_m -5)b_{r_m - 2}\\
&\quad + \frac{2r_m -5}{2r_{m}-1}b_{r_m - 1} + \frac{2(-1)^{r_{m}-2}}{\lambda k_{m}^2}(2r_m -5)(2r_{m}-3)\ez > 0.
\end{align*}
We continue in the same way to conclude that~$\sgn(b_i)=(-1)^{i+1}$, for~$1\le i<r_{m}-1$. Finally, applying the second equation in~\eqref{eq:coeff-der-psi}, it holds that
\begin{equation*}
b_0 = \frac{2}{\lambda k_m}\left(-b_1+\frac{\lambda k_m}{10}b_2-\frac{3}{k_m}\ez\right) < 0.
\end{equation*}
Then, from~\eqref{eq:Leg1} and~\eqref{eq:Leg3}, we obtain
\begin{align*}
-(\psit{r_m}{\lambda})'(t_{m-1})
&=-\sum_{i=0}^{r_m-1}(-1)^ib_i
=\sum_{i=0}^{r_m-1}|b_i|
=\sum_{i=0}^{r_m-1}|b_i|\Lnorm{\hk{i}}{\infty}{I_m}\\
&\ge\Lnorm{(\psit{r_{m}}{\lambda})'}{\infty}{I_m},
\end{align*}
which gives~\eqref{eq:psiti-der-max}. For~$r_m$ odd we may proceed similarly.

In order to complete the proof, we show~\eqref{eq:psitimax}. To this end, we evaluate~\eqref{eq:psi-characterization} at $t=t_{m-1}$:
\begin{equation*}
(\psit{r_m}{\lambda})'(t_{m-1}) + \lambda \psit{r_m}{\lambda}(t_{m-1}) -\ez\lifting{r_m}{m}(t_{m-1}) = 0.
\end{equation*}
Since the coefficients of the lifting operator~$\lifting{r_m}{m}$ are alternating, and due to property~\eqref{eq:Leg1}, it is straightforward to see that~$\Lnorm{\lifting{r_m}{m}}{\infty}{I_m}=\lifting{r_m}{m}(t_{m-1})>0$. Hence, with~$\ez>0$ and by means of~\eqref{eq:psiti-der-max}, we see that
\[
 \lambda \psit{r_m}{\lambda}(t_{m-1}) = \Lnorm{(\psit{r_m}{\lambda})'}{\infty}{I_m} + \ez \Lnorm{\lifting{r_m}{m}}{\infty}{I_m}.
\]
Thus, in view of~\eqref{eq:psi-characterization}, which implies that
\[
\lambda\Lnorm{\psit{r_m}{\lambda}}{\infty}{I_m}
\le \Lnorm{(\psit{r_m}{\lambda})'}{\infty}{I_m} + \ez \Lnorm{\lifting{r_m}{m}}{\infty}{I_m},
\]
we conclude that $\psit{r_m}{\lambda}$ takes its maximum at~$t=t_{m-1}$.
\end{proof}

\subsection{Representation formulas}\label{sc:repform}
In this section we derive explicit representation formulas for the operator~$(\gaml{r_m})^{-1}$ defined in~\eqref{eq:gamscalar}. Observing~\eqref{eq:psi-characterization} and Lemma~\ref{lem:wr-plus-lifting}, it is sufficient to investigate how $\gamli$ acts on $\wr$.

\begin{lemma}\label{lem:gamli-on-wr}
Let $w\in \wr$, then it holds:
\begin{equation*}
 \gamli(w) = \int_{t_{m-1}}^{t}\ex{\lambda (s-t)} w(s) \dd s.
\end{equation*}
\end{lemma}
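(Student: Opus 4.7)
The plan is to observe that on the subspace $\polrz$, the lifting term in $\gaml{r_m}$ vanishes, so that inverting $\gaml{r_m}$ on its image $\wr$ reduces to solving a classical first-order linear ODE with zero initial data. The formula then follows from the variation-of-constants formula.

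More concretely, let $w\in\wr$. By definition of $\wr$, there exists $v\in\polrz$ with $\gaml{r_m}(v)=w$; since $\gaml{r_m}$ is an isomorphism on $\polr$, this $v$ is unique and equals $\gamli(w)$. Because $v\in\polrz$, we have $v_{m-1}^+=v(t_{m-1})=0$, and hence, by linearity of the lifting operator, $\lifting{r_m}{m}(v_{m-1}^+)=0$. Recalling the definitions~\eqref{eq:chim} and~\eqref{eq:gamscalar}, this implies
\[
w=\gaml{r_m}(v)=v'+\lifting{r_m}{m}(v_{m-1}^+)+\lambda v=v'+\lambda v\qquad\text{on }I_m.
\]

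It remains to solve this standard linear ODE on $I_m$ together with the initial condition $v(t_{m-1})=0$. Multiplying by the integrating factor $\ex{\lambda t}$ and integrating from $t_{m-1}$ to $t$ yields
\[
v(t)=\int_{t_{m-1}}^{t}\ex{\lambda(s-t)}w(s)\dd s,
\]
which is precisely the claimed representation of $\gamli(w)$.

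There is no real obstacle here: the only point worth stressing is the compatibility, namely that although the right-hand side is a priori a transcendental function, it must in fact coincide with the polynomial $v\in\polrz$ whenever $w\in\wr$; this is guaranteed by Lemma~\ref{lem:wr-plus-lifting}, which ensures that $w$ lies in the precise polynomial subspace for which such a $v$ exists.
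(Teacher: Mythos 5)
Your proposal is correct and follows essentially the same route as the paper: identify $\gamli(w)$ with the unique $v\in\polrz$ satisfying $v'+\lambda v=w$ (the lifting term vanishing since $v(t_{m-1})=0$), and then solve this ODE by the integrating factor $\ex{\lambda t}$ with zero initial data. The closing remark about compatibility is harmless but unnecessary, since $v$ is a polynomial by construction and the computation shows it equals the stated integral.
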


\begin{proof}
Let~$w\in\wr$, and choose $v \in \polrz$ with~$w=v'+\lambda v$. Then, we have the following equation:
\begin{align*}
 w(s)=\gaml{r_{m}}(v)(s) = v'(s) + \lambda v(s) = \ex{-\lambda (s-t_{m-1})} \der{s} \left(\ex{\lambda (s-t_{m-1})}v(s) \right),\quad s\in I_m.
\end{align*}
Hence, it follows that
$\der{s} \left(\ex{\lambda (s-t_{m-1})}v(s) \right)= \ex{\lambda (s-t_{m-1})} w(s)$.
Integrating with respect to $s$ over $(t_{m-1},t)$, and using~$v(t_{m-1})=0$, we obtain
\begin{equation*}
\ex{\lambda (t-t_{m-1})}v(t)= \int_{t_{m-1}}^{t}\ex{\lambda (s-t_{m-1})} w(s) \dd s,
\end{equation*}
and therefore,
\begin{equation*}
 (\gaml{r_m})^{-1}(w)(t) = v(t) = \int_{t_{m-1}}^{t}\ex{\lambda (s-t)} w(s) \dd s.
\end{equation*}
This completes the proof.
\end{proof}

\begin{proposition}\label{prop:gammainverse}
For any~$w\in\polr$ there holds
\begin{equation*}
 \gamli (w) = -\ex{-\lambda (t-t_{m-1})} \left(\int_{I_m}w\phit{r_m}{\lambda}\dd s\right)\etam(t) + \int_{t_{m-1}}^{t} \ex{\lambda (s-t)} w \dd s,
\end{equation*}
where
\begin{equation}\label{eq:etam}
\etam(t):=1 - \int_{t_{m-1}}^{t} \ex{\lambda (s-t_{m-1})} \lifting{r_m}{m}(1) \dd s.
\end{equation}
\end{proposition}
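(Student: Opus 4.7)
The plan is to exploit the direct-sum decomposition in Lemma~\ref{lem:wr-plus-lifting} to split $w$ into a component on which $\gamli$ is given explicitly by Lemma~\ref{lem:gamli-on-wr}, and a multiple of $\lifting{r_m}{m}(1)$ whose image under $\gamli$ is the dG fundamental solution $\psit{r_m}{\lambda}$, cf.~\eqref{eq:psi-characterization2}. The dG dual solution $\phit{r_m}{\lambda}$ will serve as the ``test functional'' that extracts the coefficient in the decomposition, and the target formula will then drop out by solving the simple first-order ODE satisfied by $\psit{r_m}{\lambda}$.

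Concretely, given $w\in\polr$, I write $w = w_1 + \alpha\,\lifting{r_m}{m}(1)$ with $w_1 \in \wr$ and $\alpha\in\mathbb{R}$. To determine $\alpha$, I test against $\phit{r_m}{\lambda}$ and integrate over $I_m$: by Lemma~\ref{lem:phit-characterization} the contribution of $w_1$ vanishes, and by the defining property of the lifting operator,
\[
\int_{I_m}\lifting{r_m}{m}(1)\,\phit{r_m}{\lambda}\dd t = \phit{r_m}{\lambda}(t_{m-1}) = -\psit{r_m}{\lambda}(t_{m-1}),
\]
using Lemma~\ref{lem:init-val-psi}. This yields $\alpha = -\psit{r_m}{\lambda}(t_{m-1})^{-1}\int_{I_m} w\,\phit{r_m}{\lambda}\dd t$; note that $\psit{r_m}{\lambda}(t_{m-1})>0$ by Proposition~\ref{prop:phitimaxi} and Lemma~\ref{lem:init-val-psi}, so the division is legitimate.

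Applying $\gamli$, linearity together with Lemma~\ref{lem:gamli-on-wr} and \eqref{eq:psi-characterization2} gives
\[
\gamli(w) = \int_{t_{m-1}}^t \ex{\lambda(s-t)}w\dd s - \alpha\int_{t_{m-1}}^t \ex{\lambda(s-t)}\lifting{r_m}{m}(1)\dd s + \alpha\,\psit{r_m}{\lambda}(t),
\]
after expressing $w_1 = w - \alpha\lifting{r_m}{m}(1)$ inside the integral.

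The remaining task, and the one requiring a brief algebraic check, is to identify the bracketed term $\psit{r_m}{\lambda} - \int_{t_{m-1}}^t \ex{\lambda(s-t)}\lifting{r_m}{m}(1)\dd s$ with $\psit{r_m}{\lambda}(t_{m-1})\ex{-\lambda(t-t_{m-1})}\etam(t)$. For this I use that, by the linearity of $\lifting{r_m}{m}$, the relation \eqref{eq:psi-characterization} rewrites as the scalar ODE $(\psit{r_m}{\lambda})' + \lambda\psit{r_m}{\lambda} = \ez\lifting{r_m}{m}(1)$ with initial value $\psit{r_m}{\lambda}(t_{m-1})$, where $\ez = 1 - \psit{r_m}{\lambda}(t_{m-1})$. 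Variation of constants then yields
\[
\psit{r_m}{\lambda}(t) = \ex{-\lambda(t-t_{m-1})}\psit{r_m}{\lambda}(t_{m-1}) + \ez\int_{t_{m-1}}^t \ex{\lambda(s-t)}\lifting{r_m}{m}(1)\dd s,
\]
and rearranging one sees that $\psit{r_m}{\lambda} - \int_{t_{m-1}}^t \ex{\lambda(s-t)}\lifting{r_m}{m}(1)\dd s = \psit{r_m}{\lambda}(t_{m-1})\bigl(\ex{-\lambda(t-t_{m-1})} - \int_{t_{m-1}}^t \ex{\lambda(s-t)}\lifting{r_m}{m}(1)\dd s\bigr)$, which coincides with $\psit{r_m}{\lambda}(t_{m-1})\ex{-\lambda(t-t_{m-1})}\etam(t)$ after pulling $\ex{-\lambda(t-t_{m-1})}$ inside the integral. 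Substituting back the value of $\alpha$ collapses the factor $\psit{r_m}{\lambda}(t_{m-1})$ and produces precisely the claimed representation. The only substantive point in the argument is the ODE solve together with the cancellation of $\psit{r_m}{\lambda}(t_{m-1})$; the rest is bookkeeping.
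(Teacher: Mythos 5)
Your proposal is correct and follows essentially the same route as the paper: the decomposition $w = w_1 + \alpha\lifting{r_m}{m}(1)$ from Lemma~\ref{lem:wr-plus-lifting}, the explicit action of $\gamli$ on $\wr$ from Lemma~\ref{lem:gamli-on-wr}, the determination of $\alpha$ by testing with $\phit{r_m}{\lambda}$ via Lemmas~\ref{lem:phit-characterization} and~\ref{lem:init-val-psi}, and the identification of $\psit{r_m}{\lambda}-\int_{t_{m-1}}^{t}\ex{\lambda(s-t)}\lifting{r_m}{m}(1)\dd s$ with $\psit{r_m}{\lambda}(t_{m-1})\ex{-\lambda(t-t_{m-1})}\etam(t)$ by solving the scalar ODE (the paper derives an ODE for this difference and integrates it, while you apply variation of constants to $\psit{r_m}{\lambda}$ directly --- an immaterial difference). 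Your explicit remark that $\psit{r_m}{\lambda}(t_{m-1})>0$ justifies the division is a small but welcome addition that the paper leaves implicit.
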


\begin{proof}
Consider any~$w\in\polr$. Then, Lemma \ref{lem:wr-plus-lifting} implies that there exist $\alpha \in \mathbb{R}$ and $w_0 \in \wr$ such that $w = w_{0} + \alpha \lifting{r_m}{m}(1)$. Hence, applying Lemma \ref{lem:gamli-on-wr}, and recalling~\eqref{eq:psi-characterization}, yields
\begin{align*}
 \gamli (w) &= \gamli (w_0) + \alpha \gamli (\lifting{r_m}{m}(1)) \nonumber \\
	    &= \int_{t_{m-1}}^{t}\ex{\lambda (s-t)} w_{0} \dd s + \alpha \psit{r_m}{\lambda} \nonumber \\
	    &= \int_{t_{m-1}}^{t}\ex{\lambda (s-t)} \left( w - \alpha \lifting{r_m}{m}(1) \right) \dd s + \alpha \psit{r_m}{\lambda}\\
	 &= \alpha \left(\psit{r_m}{\lambda} - \int_{t_{m-1}}^{t}\ex{\lambda (s-t)} \lifting{r_m}{m}(1) \dd s \right) + \int_{t_{m-1}}^{t}\ex{\lambda (s-t)} w \dd s.
\end{align*}
Setting
\begin{equation*}
 \Theta_{\lambda,m}^{r_m}:=\psit{r_m}{\lambda} - \int_{t_{m-1}}^{t}\ex{\lambda (s-t)} \lifting{r_m}{m}(1) \dd s,
\end{equation*}
and using the fact that $\psit{r_m}{\lambda}$ is the solution of \eqref{eq:psi-characterization}, an elementary calculation reveals that
\begin{equation*}
 (\Theta_{\lambda,m}^{r_m})'+ \lambda \Theta_{\lambda,m}^{r_m}= -\psit{r_m}{\lambda}(t_{m-1})\lifting{r_m}{m}(1).
\end{equation*}
Integrating this identity, we arrive at
\[
\Theta_{\lambda,m}^{r_m}(t) = \ex{-\lambda (t-t_{m-1})} \psit{r_m}{\lambda}(t_{m-1}) \etam(t).
\]
Therefore,
\[
 \gamli (w) = \alpha \ex{-\lambda (t-t_{m-1})} \psit{r_m}{\lambda}(t_{m-1}) \etam(t) + \int_{t_{m-1}}^{t} \ex{\lambda (s-t)} w \dd s.
\]
In order to determine the value of~$\alpha$, we employ Lemma~\ref{lem:phit-characterization} and~\ref{lem:init-val-psi}. This yields
\begin{equation*}
 \int_{I_m} w \phit{r_m}{\lambda} \dd t = \alpha\int_{I_m} \lifting{r_m}{m}(1)\phit{r_m}{\lambda} \dd t = \alpha \phit{r_m}{\lambda} (t_{m-1})
= -\alpha\psit{r_m}{\lambda}(t_{m-1}),
\end{equation*}
which directly leads to the desired formula.
\end{proof}

The following lemma gives an interesting interpretation of~$\etam$ defined in~\eqref{eq:etam}. Let us denote by
\begin{equation}\label{eq:error}
\errorode{m}{\lambda}:=\ex{-\lambda(t-t_{m-1})}-\psit{r_m}{\lambda}(t),\qquad t\in I_m,
\end{equation}
the pointwise error between the solution~$\psi$ of \eqref{eq:slODE}, and the dG fundamental solution~$\psit{r_m}{\lambda}$ from~\eqref{eq:psi-characterization2}.

\begin{lemma}\label{lem:etsformula}
We have the identity
\begin{equation*}
\ex{-\lambda (t-t_{m-1})}\etam(t)=\frac{\errorode{m}{\lambda}(t)}{\errorode{m}{\lambda}(t_{m-1})}, \qquad t\in I_m.
\end{equation*}
\end{lemma}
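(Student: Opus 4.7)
The plan is to show that both sides of the claimed identity solve the same first-order linear scalar initial value problem on $I_m$; the identity then follows by uniqueness.

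First, I would handle the left-hand side. Setting $F(t) := \ex{-\lambda(t-t_{m-1})}\etam(t)$, the definition~\eqref{eq:etam} immediately gives $F(t_{m-1}) = 1$, and differentiating via the fundamental theorem of calculus together with the product rule yields the pointwise identity
\[
F'(t) + \lambda F(t) = -\lifting{r_m}{m}(1)(t), \qquad t \in I_m.
\]

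Next, I would extract a pointwise ODE for $\psit{r_m}{\lambda}$ from~\eqref{eq:psi-characterization}. Unfolding $\gaml{r_m}$ via~\eqref{eq:chim} and~\eqref{eq:gamscalar} and using the linearity of $\lifting{r_m}{m}$ in its argument, the relation $\gaml{r_m}(\psit{r_m}{\lambda}) = \lifting{r_m}{m}(1)$ becomes
\[
(\psit{r_m}{\lambda})' + \lambda \psit{r_m}{\lambda} = \lifting{r_m}{m}\bigl(1 - \psit{r_m}{\lambda}(t_{m-1})\bigr) = \ez\,\lifting{r_m}{m}(1),
\]
where the second equality invokes the definition~\eqref{eq:error} of $\errorode{m}{\lambda}$ at $t_{m-1}$. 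Subtracting this from the trivial identity $(\ex{-\lambda(t-t_{m-1})})' + \lambda \ex{-\lambda(t-t_{m-1})} = 0$, I obtain $(\errorode{m}{\lambda})' + \lambda \errorode{m}{\lambda} = -\ez\,\lifting{r_m}{m}(1)$. Since $\ez > 0$ by~\eqref{eq:ez+} (a consequence of $0 < \psit{r_m}{\lambda}(t_{m-1}) < 1$, itself guaranteed by Proposition~\ref{prop:phitimaxi} together with Lemma~\ref{lem:init-val-psi}), division by $\ez$ shows that $G(t) := \errorode{m}{\lambda}(t)/\ez$ satisfies
\[
G'(t) + \lambda G(t) = -\lifting{r_m}{m}(1)(t), \qquad G(t_{m-1}) = 1.
\]

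Finally, since $F$ and $G$ are both $\fspace{C}^1$ on $I_m$ and solve the same scalar linear first-order ODE with the same initial datum, uniqueness gives $F \equiv G$ on $I_m$, which is precisely the claimed formula. I do not foresee any real obstacle; the only step calling for care is the unfolding of $\gaml{r_m}(\psit{r_m}{\lambda}) = \lifting{r_m}{m}(1)$, where the linearity of the lifting is essential to collapse $\lifting{r_m}{m}(1) - \lifting{r_m}{m}(\psit{r_m}{\lambda}(t_{m-1}))$ into the single scalar multiple $\ez\,\lifting{r_m}{m}(1)$, and a small verification that $\ez \neq 0$ so that $G$ is well-defined.
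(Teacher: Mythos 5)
Your proof is correct, and it takes a genuinely different route from the paper. The paper obtains the identity as a by-product of its representation formula: it applies Proposition~\ref{prop:gammainverse} with $w=\lifting{r_m}{m}(1)$, uses~\eqref{eq:psi-characterization2} to recognize the left-hand side as $\psit{r_m}{\lambda}$, eliminates the coefficient $\phit{r_m}{\lambda}(t_{m-1})$ via Lemma~\ref{lem:init-val-psi}, and then rearranges the resulting Duhamel-type expression
\begin{equation*}
\psit{r_m}{\lambda}(t)=\psit{r_m}{\lambda}(t_{m-1})\ex{-\lambda(t-t_{m-1})}+\ez\int_{t_{m-1}}^{t}\ex{\lambda(s-t)}\lifting{r_m}{m}(1)\dd s
\end{equation*}
into the claimed form. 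You instead verify directly that $\ex{-\lambda(t-t_{m-1})}\etam(t)$ and $\errorode{m}{\lambda}(t)/\ez$ solve the same scalar initial value problem $y'+\lambda y=-\lifting{r_m}{m}(1)$, $y(t_{m-1})=1$, and conclude by uniqueness. Your argument is more elementary and self-contained: it bypasses Proposition~\ref{prop:gammainverse}, the dual solution $\phit{r_m}{\lambda}$, and Lemma~\ref{lem:init-val-psi} entirely, needing only the pointwise form of~\eqref{eq:psi-characterization} (where, as you correctly flag, the linearity $\lifting{r_m}{m}(z)=z\,\lifting{r_m}{m}(1)$ collapses the two lifting terms) and the nondegeneracy $\ez>0$, which is available from Proposition~\ref{prop:phitimaxi} and Lemma~\ref{lem:init-val-psi} without circularity. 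What the paper's route buys is coherence with the surrounding development -- the lemma is presented precisely as an interpretation of $\etam$ inside the representation formula of Proposition~\ref{prop:gammainverse}, so reusing that formula is natural there -- but as a standalone proof of the identity your ODE-uniqueness argument is shorter and arguably cleaner.
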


\begin{proof}
Due to~\eqref{eq:ez+}, let us first note that the right-hand side in the above identity is well-defined. Recalling~\eqref{eq:psi-characterization2}, and applying Proposition \ref{prop:gammainverse} with~$w=\lifting{r_m}{m}(1)$, we note that
\begin{align*}
\psit{r_m}{\lambda}(t) 
&= -\ex{-\lambda (t-t_{m-1})} \left(\int_{I_m}\lifting{r_m}{m}(1)\phit{r_m}{\lambda}\dd s\right)\etam(t) + \int_{t_{m-1}}^{t} \ex{\lambda (s-t)} \lifting{r_m}{m}(1) \dd s\\
&=- \phit{r_m}{\lambda}(t_{m-1})\ex{-\lambda (t-t_{m-1})} \etam(t) + \int_{t_{m-1}}^{t} \ex{\lambda (s-t)} \lifting{r_m}{m}(1) \dd s.
\end{align*}
By virtue of Lemma \ref{lem:init-val-psi}, this leads to
\begin{align*}
\psit{r_m}{\lambda}(t) 
&= \psit{r_m}{\lambda}(t_{m-1})\ex{-\lambda (t-t_{m-1})} + (1- \psit{r_m}{\lambda}(t_{m-1}))\int_{t_{m-1}}^{t} \ex{\lambda (s-t)} \lifting{r_m}{m}(1) \dd s,
\end{align*}
and thus,
\begin{align*}
-\errorode{m}{\lambda}(t)&=-\errorode{m}{\lambda}(t_{m-1})\ex{-\lambda (t-t_{m-1})} + \errorode{m}{\lambda}(t_{m-1})\int_{t_{m-1}}^{t} \ex{\lambda (s-t)} \lifting{r_m}{m}(1) \dd s\\
&=-\errorode{m}{\lambda}(t_{m-1})\ex{-\lambda (t-t_{m-1})}\etam(t).
\end{align*}
This shows the lemma.
\end{proof}

Summarizing the above results, we obtain the following representation expression.

\begin{corollary}\label{cor:gammainverse}
For any~$w\in\polr$, the identity
\begin{equation}\label{eq:gammainverse2}
 \gamli (w) = - \frac{\errorode{m}{\lambda}(t)}{\errorode{m}{\lambda}(t_{m-1})}\int_{I_m}w\phit{r_m}{\lambda}(t)\dd t + \int_{t_{m-1}}^{t} \ex{\lambda (s-t)} w \dd s
\end{equation}
holds true.
\end{corollary}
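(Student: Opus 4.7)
The plan is to obtain the stated identity as a direct consequence of Proposition~\ref{prop:gammainverse} combined with Lemma~\ref{lem:etsformula}. Proposition~\ref{prop:gammainverse} already provides the representation
\[
\gamli(w) = -\ex{-\lambda(t-t_{m-1})}\left(\int_{I_m} w\,\phit{r_m}{\lambda}\dd s\right)\etam(t) + \int_{t_{m-1}}^{t} \ex{\lambda(s-t)} w \dd s,
\]
so only the prefactor $\ex{-\lambda(t-t_{m-1})}\etam(t)$ in the first term needs to be rewritten.

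Lemma~\ref{lem:etsformula} does exactly that: it identifies
\[
\ex{-\lambda(t-t_{m-1})}\etam(t) = \frac{\errorode{m}{\lambda}(t)}{\errorode{m}{\lambda}(t_{m-1})},
\]
with the denominator being nonzero thanks to~\eqref{eq:ez+}. Substituting this identity into the representation from Proposition~\ref{prop:gammainverse} immediately yields~\eqref{eq:gammainverse2}.

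Since both ingredients are already established, there is essentially no obstacle: the entire argument is a one-line substitution, and the statement genuinely has the flavour of a corollary rather than an independent result. The only bookkeeping points worth flagging are that the dummy variable in the integral $\int_{I_m} w \phit{r_m}{\lambda} \dd s$ from Proposition~\ref{prop:gammainverse} is harmlessly renamed to $t$ in the statement of the corollary (since $\phit{r_m}{\lambda}$ is being integrated out), and that one should briefly remark that $\errorode{m}{\lambda}(t_{m-1}) = 1 - \psit{r_m}{\lambda}(t_{m-1}) > 0$ by Proposition~\ref{prop:phitimaxi} and Lemma~\ref{lem:init-val-psi}, so that the quotient on the right-hand side is well-defined for every admissible $\lambda > 0$.
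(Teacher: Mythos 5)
Your proof is correct and matches the paper's own (implicit) argument exactly: the corollary is obtained by substituting the identity of Lemma~\ref{lem:etsformula} into the representation of Proposition~\ref{prop:gammainverse}, with the well-definedness of the quotient guaranteed by~\eqref{eq:ez+}. Nothing further is needed.
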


\subsection{Stability}\label{sc:stabilityscalar}

We are now in a position to derive stability bounds for~$\gamli$ as well as for the scalar dG time stepping solution from~\eqref{eq:nhdG}. In this section, let us suppose that~$\lambda>0$.

\begin{proposition}[$\fspace{L}^\infty$-$\fspace{L}^2$-Stability of $\gamli$]\label{prop:stabgamliL2}
Let $w \in \polr$, $1\le m\le M$. Then there holds the stability estimate
\begin{equation*}
\Lnorm{\gamli (w)}{\infty}{I_m} \leq C^{\fspace{L}^2}_{\lambda,r_m}k_m^{\nicefrac12} \Lnorm{w}{2}{I_m},
\end{equation*}
where
\begin{equation}\label{eq:CL2}
 C^{\fspace{L}^2}_{\lambda,r_m}:=\Cstab
 \left\lVert \frac{\errorode{m}{\lambda}}{\errorode{m}{\lambda}(t_{m-1})} \right \rVert_{\Lspace{\infty}{I_m}}  + 1.
\end{equation}
\end{proposition}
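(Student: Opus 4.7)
The plan is to exploit the explicit representation formula for $\gamli$ established in Corollary~\ref{cor:gammainverse}, namely
\[
\gamli(w)(t) = -\frac{\errorode{m}{\lambda}(t)}{\errorode{m}{\lambda}(t_{m-1})}\int_{I_m} w\,\phit{r_m}{\lambda}\dd s + \int_{t_{m-1}}^{t} \ex{\lambda(s-t)} w(s)\dd s,
\]
and then to bound the two summands separately in the $\fspace{L}^\infty(I_m)$-norm via the triangle inequality. The denominator $\errorode{m}{\lambda}(t_{m-1})=\ez$ is strictly positive thanks to~\eqref{eq:ez+}, so the quotient appearing on the right is well-defined and is pointwise dominated by precisely the norm $\lVert \errorode{m}{\lambda}/\errorode{m}{\lambda}(t_{m-1}) \rVert_{\Lspace{\infty}{I_m}}$ occurring in~\eqref{eq:CL2}.

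For the first summand I would pull the supremum of the quotient outside and then apply Cauchy--Schwarz on the time integral against $\phit{r_m}{\lambda}$; this produces a factor $\Lnorm{\phit{r_m}{\lambda}}{2}{I_m}$, which Lemma~\ref{lem:phiL2} readily replaces by $\Cstab k_m^{\nicefrac12}$. This yields exactly the contribution $\Cstab\,\lVert \errorode{m}{\lambda}/\errorode{m}{\lambda}(t_{m-1}) \rVert_{\Lspace{\infty}{I_m}}\,k_m^{\nicefrac12}\Lnorm{w}{2}{I_m}$ in the final constant.

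For the second summand I would use that $\ex{\lambda(s-t)}\le 1$ for $s\in[t_{m-1},t]$ (since $\lambda>0$), so
\[
\left\lvert \int_{t_{m-1}}^{t} \ex{\lambda(s-t)} w(s)\dd s \right\rvert \le \int_{I_m} \lvert w(s)\rvert \dd s \le k_m^{\nicefrac12}\Lnorm{w}{2}{I_m}
\]
by a further application of Cauchy--Schwarz, contributing the remaining $+1$ to the constant. Summing the two estimates and taking the supremum over $t\in I_m$ gives the asserted bound with $C^{\fspace{L}^2}_{\lambda,r_m}$ as in~\eqref{eq:CL2}.

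There is no genuine obstacle here, since the heavy lifting has already been performed: Corollary~\ref{cor:gammainverse} provides the splitting, Lemma~\ref{lem:phiL2} supplies the $\Cstab k_m^{\nicefrac12}$ bound on $\Lnorm{\phit{r_m}{\lambda}}{2}{I_m}$, and~\eqref{eq:ez+} guarantees well-posedness of the quotient. The only point deserving a brief check is that the two Cauchy--Schwarz steps produce the same $k_m^{\nicefrac12}$ scaling in both terms, so that the final estimate genuinely factors as a common prefactor $k_m^{\nicefrac12}\Lnorm{w}{2}{I_m}$ multiplied by $C^{\fspace{L}^2}_{\lambda,r_m}$.
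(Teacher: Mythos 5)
Your proposal is correct and follows essentially the same route as the paper: split via Corollary~\ref{cor:gammainverse}, apply Cauchy--Schwarz and Lemma~\ref{lem:phiL2} to the first term, and bound the Duhamel-type integral by $k_m^{\nicefrac12}\Lnorm{w}{2}{I_m}$ (the paper keeps $e^{2\lambda(s-t)}$ inside the Cauchy--Schwarz step and then bounds the resulting integral by $k_m$, whereas you first bound the exponential by $1$; both give the same estimate).
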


\begin{proof}
We separately bound the two terms on the right-hand side of~\eqref{eq:gammainverse2}. By means of the Cauchy-Schwarz inequality and Lemma~\ref{lem:phiL2}, we have
\begin{align*}
\left\lvert\int_{I_m}w\phit{r_m}{\lambda}\dd t \right\rvert 
&\le \Lnorm{w}{2}{I_m} \Lnorm{\phit{r_m}{\lambda}}{2}{I_m} 
\le k_m^{\nicefrac12}\Cstab\Lnorm{w}{2}{I_m}.
\end{align*}
Therefore, we infer that
\begin{equation}\label{eq:est-w11}
 \left\lvert \frac{\errorode{m}{\lambda}(t)}{\errorode{m}{\lambda}(t_{m-1})}\int_{I_m}w\phit{r_m}{\lambda}\dd t \right\rvert 
 \leq k_m^{\nicefrac12}\Cstab\Lnorm{\frac{\errorode{m}{\lambda}(t)}{\errorode{m}{\lambda}(t_{m-1})}}{\infty}{I_m}\lVert w \rVert_{L^{2}(I_m)}.
\end{equation}
Similarly, there holds
\begin{equation}\label{eq:est-w21}
\begin{split}
\left\|\int_{t_{m-1}}^te^{\lambda(s-t)}w(s)\dd s\right\|_{\fspace{L}^\infty(I_m)}
&\le\sup_{t\in I_m}\left(\int_{t_{m-1}}^te^{2\lambda(s-t)}\dd s\right)^{\nicefrac12}\|w\|_{\fspace{L}^2(I_m)}\\
&\le k_m^{\nicefrac12}\left\|w\right\|_{\fspace{L}^2(I_m)}.
\end{split}
\end{equation}
The two estimates~\eqref{eq:est-w11} and \eqref{eq:est-w21} immediately imply the asserted result.
\end{proof}

\begin{remark}($\fspace{L}^\infty$-$\fspace{L}^1$-Stability of $\gamli$)
As in the above Proposition~\ref{prop:stabgamliL2}, for $w \in \polr$, $1\le m\le M$, we can derive the bound
\begin{equation}\label{eq:L1stab}
\Lnorm{\gamli (w)}{\infty}{I_m} \leq C^{\fspace{L}^1}_{\lambda,r_m} \Lnorm{w}{1}{I_m},
\end{equation}
where
\begin{equation}\label{eq:CL1}
 C^{\fspace{L}^1}_{\lambda,r_m}:=
 \left\lVert \frac{\errorode{m}{\lambda}}{\errorode{m}{\lambda}(t_{m-1})} \right \rVert_{\Lspace{\infty}{I_m}}  + 1.
\end{equation}
Indeed, to see this, for the first term on the right-hand side of~\eqref{eq:gammainverse2}, we employ Proposition~\ref{prop:phitimaxi} to obtain
\begin{align*}
\left\lvert\int_{I_m}w\phit{r_m}{\lambda}\dd t \right\rvert 
&\le \Lnorm{w}{1}{I_m} \Lnorm{\phit{r_m}{\lambda}}{\infty}{I_m} 
= \Lnorm{w}{1}{I_m} \lvert {\phit{r_m}{\lambda}}(t_{m-1}) \rvert
\le \Lnorm{w}{1}{I_m}.
\end{align*}
Therefore, we obtain the bound
\begin{equation}\label{eq:est-w1}
 \left\lvert \frac{\errorode{m}{\lambda}(t)}{\errorode{m}{\lambda}(t_{m-1})}\int_{I_m}w\phit{r_m}{\lambda}\dd t \right\rvert \leq \Lnorm{\frac{\errorode{m}{\lambda}(t)}{\errorode{m}{\lambda}(t_{m-1})}}{\infty}{I_m}\lVert w \rVert_{L^{1}(I_m)}.
\end{equation}
As for the second term, we note that
\begin{equation}\label{eq:est-w2}
\begin{split}
\left\|\int_{t_{m-1}}^te^{\lambda(s-t)}w(s)\dd s\right\|_{\fspace{L}^\infty(I_m)}
&\le \sup_{t \in I_m}\int_{t_{m-1}}^te^{\lambda(s-t)}|w(s)|\dd s
\le\left\|w\right\|_{\fspace{L}^1(I_m)}.
\end{split}
\end{equation}
Thence, combining \eqref{eq:est-w1} and \eqref{eq:est-w2} gives~\eqref{eq:L1stab}.
\end{remark}

\begin{remark}\label{rem:errorode}
The term~$\left\lVert \errorode{m}{\lambda}(t_{m-1})^{-1}\errorode{m}{\lambda} \right \rVert_{\Lspace{\infty}{I_m}}$ arising in the constants~$C^{\fspace{L}^2}_{\lambda,r_m}$ and~$C^{\fspace{L}^1}_{\lambda,r_m}$ from~\eqref{eq:CL2} and~\eqref{eq:CL1}, respectively, can be estimated uniformly with respect to the time step~$k_m$ and the polynomial degree~$r_m$. In fact, performing an integration by parts in~\eqref{eq:etam}, we note that
\[
\etam(t)=1-\ex{\lambda(t-t_{m-1})}\rhoo(t) +\lambda \int_{t_{m-1}}^{t} \ex{\lambda (s-t_{m-1})} \rhoo(s) \dd s,
\]
where we define
\[
\rhoo(t):=\int_{t_{m-1}}^t\lifting{r_m}{m}(1)\dd s,\qquad t\in I_m.
\]
Rearranging terms, we obtain
\begin{align*}
\etam(t)=\ex{\lambda(t-t_{m-1})}(1-\rhoo(t)) -\lambda \int_{t_{m-1}}^{t} \ex{\lambda (s-t_{m-1})} (1-\rhoo(s)) \dd s,\qquad t\in I_m.
\end{align*}
Referring to~\cite[Lemma~1]{HolmWihler:15} it holds that
\begin{equation}\label{eq:rhomest}
\Lnorm{1-\rhoo}{\infty}{I_m}=1.
\end{equation} 
Consequently, we conclude that
\[
|\etam(t)|
\le\ex{\lambda(t-t_{m-1})}+\lambda \int_{t_{m-1}}^{t} \ex{\lambda (s-t_{m-1})}\dd s
= 2\ex{\lambda(t-t_{m-1})}-1.
\]
Recalling Lemma~\ref{lem:etsformula} results in 
\begin{equation}\label{eq:Cbound0}
\left\lVert \frac{\errorode{m}{\lambda}}{\errorode{m}{\lambda}(t_{m-1})} \right \rVert_{\Lspace{\infty}{I_m}}
\le \sup_{t\in I_m}\left(2-\ex{-\lambda(t-t_{m-1})}\right)=2-\ex{-\lambda k_m}.
\end{equation}
In particular, 
\begin{equation}\label{eq:Cbound}
1\le C^{\fspace{L}^2}_{\lambda,r_m}
\le\left(2-\ex{-\lambda k_m}\right)\Cstab  + 1,
\end{equation}
in~\eqref{eq:CL2}, and, thus, $C^{\fspace{L}^2}_{\lambda,r_m}\to 1$ as~$r\to\infty$ uniformly with respect to~$\lambda$.
Incidentally, a considerably more detailed analysis in~\cite{SchmutzDiss:18} reveals that there even holds $\left\lVert \errorode{m}{\lambda}(t_{m-1})^{-1}\errorode{m}{\lambda} \right \rVert_{\Lspace{\infty}{I_m}}= 1$. 
\end{remark}

\begin{remark}
For~$\lambda=0$, recalling~\eqref{eq:phit0}, we see that Proposition \ref{prop:gammainverse} implies a representation formula for $(\chi^{r_m})^{-1}$, cf.~\eqref{eq:chim}:
\begin{align*}
 (\chi_m^{r_m})^{-1} (w) 
 = (-1)^{r_{m}+1} \left(\int_{I_m}wK^m_{r_m}\dd t\right)\left(1 - \rhoo(t) \right) + \int_{t_{m-1}}^{t} w \dd s,
\end{align*}
for any~$w\in\polr$. Revisiting~\eqref{eq:rhomest}, and denoting by
\[
w_{r_m}:=\frac{2r_m+1}{k_m}\int_{I_m}wK^m_{r_m}\dd t
\]
the~$r_m$-th Legendre coefficient of~$w$, cf.~\eqref{eq:Leg2}, this leads to the stability estimate
\[
\Lnorm{(\chi_m^{r_m})^{-1} (w)}{\infty}{I_m}
\le |w_{r_m}|+\Lnorm{w}{1}{I_m},\qquad w\in\polr,
\]
which is an improvement of~\cite[Proposition~1]{HolmWihler:15}.
\end{remark}

The above Proposition~\ref{prop:stabgamliL2} immediately implies an $\Lspace{\infty}{I_m}$-stability bound for the dG time stepping solution~$U\in\polr$ from~\eqref{eq:nhdG}.

\begin{theorem}[$\fspace{L}^\infty$-stability of scalar dG solution]\label{prop:stab-dg-inhom-scal}
The dG solution~$U\in\polr$ from \eqref{eq:nhdG} satisfies
\begin{equation*}
\Lnorm{U}{\infty}{I_m}
\leq \lvert u_{m-1} \rvert+C^{\fspace{L}^2}_{\lambda,r_m}\Lnorm{f}{2}{I_m},
\end{equation*}
with~$C^{\fspace{L}^2}_{\lambda,r_m}$ from~\eqref{eq:CL2}.
\end{theorem}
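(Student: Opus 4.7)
The plan is to split the representation formula \eqref{eq:nhdG} using linearity of $\gamli$, and then bound each piece separately using the stability results already established. More precisely, I would write
\begin{equation*}
U \;=\; u_{m-1}\,\gamli\!\left(\lifting{r_m}{m}(1)\right) \;+\; \gamli\!\left(\projts f\right),
\end{equation*}
and then recall from~\eqref{eq:psi-characterization2} that the first term equals $u_{m-1}\psit{r_m}{\lambda}$. Thus the bound reduces to controlling $\Lnorm{\psit{r_m}{\lambda}}{\infty}{I_m}$ for the initial data contribution, and applying Proposition~\ref{prop:stabgamliL2} to $w:=\projts f$ for the source contribution.

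For the initial data term, Proposition~\ref{prop:stab-psi} states $\Lnorm{\psit{r_m}{\lambda}}{\infty}{I_m}=\psit{r_m}{\lambda}(t_{m-1})$. Combining Lemma~\ref{lem:init-val-psi} with the upper bound in~\eqref{eq:phitisandwich} (i.e.\ $\phit{r_m}{\lambda}(t_{m-1})>-1$), one gets $\psit{r_m}{\lambda}(t_{m-1})<1$, and from~\eqref{eq:psisandwich} this value is also strictly positive. Hence $\Lnorm{u_{m-1}\psit{r_m}{\lambda}}{\infty}{I_m}\le |u_{m-1}|$, which accounts for the first term on the right-hand side of the claimed inequality.

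For the source contribution, Proposition~\ref{prop:stabgamliL2} yields
\begin{equation*}
\Lnorm{\gamli(\projts f)}{\infty}{I_m} \;\le\; C^{\fspace{L}^2}_{\lambda,r_m}\,k_m^{\nicefrac12}\,\Lnorm{\projts f}{2}{I_m},
\end{equation*}
and the $\fspace{L}^2$-stability of the orthogonal projection $\projts$ gives $\Lnorm{\projts f}{2}{I_m}\le\Lnorm{f}{2}{I_m}$. Putting the two bounds together delivers the stated estimate. There is essentially no deep obstacle here: the heavy lifting has already been done in Propositions~\ref{prop:stab-psi} and~\ref{prop:stabgamliL2}; the only mildly subtle point is verifying that the bound $\psit{r_m}{\lambda}(t_{m-1})\le 1$ truly follows from the dual-solution inequalities established earlier (via Lemma~\ref{lem:init-val-psi}), so that no spurious constant is picked up in front of $|u_{m-1}|$.
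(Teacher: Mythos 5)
Your proposal is correct and follows essentially the same route as the paper: split \eqref{eq:nhdG} by linearity, identify $\gamli(\lifting{r_m}{m}(1))=\psit{r_m}{\lambda}$ via \eqref{eq:psi-characterization2}, bound that term by $|u_{m-1}|$ using \eqref{eq:psitimax} and \eqref{eq:psisandwich}, and treat the source term with Proposition~\ref{prop:stabgamliL2} plus the $\fspace{L}^2$-stability of $\projts$. The only cosmetic difference is that you re-derive $0<\psit{r_m}{\lambda}(t_{m-1})<1$ from Lemma~\ref{lem:init-val-psi} and \eqref{eq:phitisandwich}, whereas the paper simply cites \eqref{eq:psisandwich}, which was obtained the same way.
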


\begin{proof}
Employing the triangle inequality to~\eqref{eq:nhdG}, together with the linearity of $\gamli$ and~$\lifting{r_m}{m}$, we have
\begin{equation*}
\Lnorm{U}{\infty}{I_m} 
\leq |u_{m-1}|\Lnorm{\gamli (\lifting{r_m}{m}(1))}{\infty}{I_m} +\Lnorm{\gamli (\projts f)}{\infty}{I_m}.
\end{equation*}
Recalling~\eqref{eq:psi-characterization2}, it follows that
\[
\Lnorm{U}{\infty}{I_m} 
\leq |u_{m-1}|\Lnorm{\psit{r_m}{m}}{\infty}{I_m} +\Lnorm{\gamli (\projts f)}{\infty}{I_m}.
\]
Using~\eqref{eq:psitimax} and~\eqref{eq:psisandwich}, and estimating the second term on the right-hand side of the above inequality by means of Proposition~\ref{prop:stabgamliL2}, we deduce that
\[
\Lnorm{U}{\infty}{I_m}
\leq \lvert u_{m-1} \rvert+C^{\fspace{L}^2}_{\lambda,r_m}k_m^{\nicefrac12}\Lnorm{\projts f}{2}{I_m}.
\]
The proof now follows from applying the $\fspace{L}^2(I_m)$-stability of~$\projts$.
\end{proof}

\section{Linear parabolic equations}\label{sc:abstract}

We now attend to the stability of the fully discrete dG time discretization~\eqref{eq:semilinear-dg-strong} for the linear parabolic evolution problem~\eqref{eq:slPDE}. For this purpose, for~$1\le m\le M$, we make use of the spectral decomposition of the discrete elliptic operator~$\operator{A}_m$ introduced in~\eqref{eq:Am}: Since $\operator{A}_m$ is self-adjoint and positive definite, there exist orthonormal basis functions~$\{ \varphi_i \}_{i=1}^{n_m}\subset\hspace{X}_m$, $\hspace{X}_m=\spn\{\varphi_1,\ldots,\varphi_{n_m}\}$,  which are eigenfunctions of~$\operator{A}_m$:
\begin{equation}\label{eq:Aspectral}
\iprod{\varphi_i}{\varphi_j}{\hspace H}=\delta_{ij},\qquad
\operator{A}_m\varphi_i=\lambda_i\varphi_i, \qquad i,j=1\ldots,n_m.
\end{equation}
Here, for~$1\le i\le n_m$, we signify by~$\lambda_i>0$ the (real) eigenvalue corresponding to~$\varphi_i$. Then, any function~$w\in\pol$ can be represented as
\begin{equation}\label{eq:wspectral}
w(t)=\sum_{i=1}^{n_m}a_i(t)\varphi_i,
\end{equation}
where~$a_i\in\polr$ are time-dependent coefficients, and there holds
\begin{equation*}
\|w(t)\|_{\hspace H}^2=\sum_{i=1}^{n_m} a_i(t)^2,\qquad t\in I_m.
\end{equation*}

\subsection{Stability of dG solution operator}

Following our approach in Section~\ref{sc:stabilityscalar} we now investigate the stability of the inverse of the discrete parabolic operator~$\gam{r_{m}}$ from~\eqref{eq:gam}.

\begin{proposition}\label{prop:gammapinverse}
Given $w \in \pol$, with a spectral representation as in \eqref{eq:wspectral}, then we have
\begin{equation}\label{eq:gamAi}
 \gami (w) = \sum_{i=1}^{n_m} \gamliindex{r_m}(a_i) \varphi_i,
\end{equation}
where~$\gam{r_{m}}$ and~$\gamlindex{r_m}$ are the discrete operators defined in~\eqref{eq:gam} and~\eqref{eq:gamscalar}, respectively. Moreover, the estimate
\begin{equation}\label{eq:Ginv}
\Lnorm{\gami (w)}{\infty}{I_m;\hspace{H}}\le C_mk_m^{\nicefrac12}\Lnorm{w}{2}{I_m;\hspace{H}}
\end{equation}
holds true, with
\begin{equation}\label{eq:CM}
C_m:=\max_{1\le i\le n_m}C^{\fspace{L}^2}_{\lambda_i,r_m},
\end{equation}
where~$C^{\fspace{L}^2}_{\lambda_i,r_m}$ is defined in~\eqref{eq:CL2}; cf. also~\eqref{eq:Cbound}.
\end{proposition}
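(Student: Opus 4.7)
My plan is to exploit the spectral decomposition of $\operator{A}_m$ in order to reduce the abstract proposition to its scalar counterparts established in Section~\ref{sc:stabilityscalar}. The first step is to verify~\eqref{eq:gamAi} by constructing an explicit candidate for $\gami(w)$ and confirming that $\gam{r_m}$ maps it back to $w$. Define
\[
U := \sum_{i=1}^{n_m}\gamliindex{r_m}(a_i)\,\varphi_i\in\pol.
\]
The key observation is that both the temporal derivative and the lifting operator~$\lifting{r_m}{m}$ act linearly on the spatial argument, and $\lifting{r_m}{m}(\alpha\varphi_i)=\alpha\,\lifting{r_m}{m}(\varphi_i)$ for scalars $\alpha$ by~\eqref{eq:liftingstrong}. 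Consequently $\chi_m^{r_m}(c\,\varphi_i)$ reduces to $\varphi_i$ multiplied by the scalar operator from~\eqref{eq:chim} applied to~$c\in\polr$. Combining this with the eigenvalue relation $\operator{A}_m\varphi_i = \lambda_i\varphi_i$ yields
\[
\gam{r_m}(U) = \sum_{i=1}^{n_m}\bigl[\chi_m^{r_m}(\gamliindex{r_m}(a_i)) + \lambda_i\,\gamliindex{r_m}(a_i)\bigr]\varphi_i = \sum_{i=1}^{n_m}\gamlindex{r_m}\gamliindex{r_m}(a_i)\,\varphi_i = w,
\]
which proves~\eqref{eq:gamAi}.

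For the stability bound~\eqref{eq:Ginv} I would proceed componentwise. Since coercivity of~$\operator{A}$, cf.~\eqref{eq:Aprop}, forces all eigenvalues $\lambda_i>0$, I can invoke Proposition~\ref{prop:stabgamliL2} for each scalar component:
\[
\Lnorm{\gamliindex{r_m}(a_i)}{\infty}{I_m} \le C^{\fspace{L}^2}_{\lambda_i,r_m}\,k_m^{\nicefrac12}\Lnorm{a_i}{2}{I_m}.
\]
Fixing $t\in I_m$ and applying Parseval's identity with respect to the $\hspace{H}$-orthonormal basis $\{\varphi_i\}$, I obtain
\[
\NN{\gami(w)(t)}_{\hspace{H}}^{2} = \sum_{i=1}^{n_m}\abs{\gamliindex{r_m}(a_i)(t)}^{2}
\le \sum_{i=1}^{n_m}\Lnorm{\gamliindex{r_m}(a_i)}{\infty}{I_m}^{2}
\le k_m\sum_{i=1}^{n_m}\bigl(C^{\fspace{L}^2}_{\lambda_i,r_m}\bigr)^{2}\Lnorm{a_i}{2}{I_m}^{2}.
\]
Bounding each $C^{\fspace{L}^2}_{\lambda_i,r_m}\le C_m$ from~\eqref{eq:CM}, and using Parseval once more to rewrite the remaining sum as
\[
\sum_{i=1}^{n_m}\Lnorm{a_i}{2}{I_m}^{2} = \int_{I_m}\sum_{i=1}^{n_m}a_i(t)^{2}\dd t = \Lnorm{w}{2}{I_m;\hspace{H}}^{2},
\]
I can take the supremum over $t\in I_m$ and the square root to conclude~\eqref{eq:Ginv}.

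The only mildly subtle point is the first step: one must carefully check that $\chi_m^{r_m}$ and the lifting operator respect the spectral decomposition, because the lifting in the abstract setting takes values in $\hspace{X}_m$ while its scalar counterpart from Section~\ref{sec:scalar} takes values in $\mathbb{R}$. This is settled by observing that both are defined through the same explicit Legendre representation~\eqref{eq:liftingstrong} and depend linearly on the spatial argument, so that the abstract operator indeed decouples into its scalar pendants along the eigenbasis. Once this is established, the stability estimate is essentially a direct consequence of the scalar result Proposition~\ref{prop:stabgamliL2} combined with Parseval.
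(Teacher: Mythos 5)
Your proposal is correct and follows essentially the same route as the paper: the paper diagonalizes $\gam{r_m}$ along the eigenbasis by writing $w=\gam{r_m}(v)$ for the unique preimage $v=\sum_i b_i\varphi_i$ and comparing coefficients to get $b_i=\gamliindex{r_m}(a_i)$, which is the same computation as your verification that $\gam{r_m}$ applied to your candidate returns $w$. The stability estimate via orthonormality of $\{\varphi_i\}$, the scalar bound of Proposition~\ref{prop:stabgamliL2}, and Parseval is identical to the paper's argument.
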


\begin{proof}
Let~$w\in\pol$. Since~$\gam{r_m}$ is an isomorphism on~$\pol$ there exists a unique~$v\in\pol$, 
\[
v=\sum_{i=1}^{n_m}b_i\varphi_i,\qquad b_i\in\polr,\quad 1\le i\le n_m,
\]
such that~$w=\gam{r_m}(v)$. Equivalently, by linearity of~$\gam{r_m}$,
\begin{align*}
w&=\sum_{i=1}^{n_m}\gam{r_m}(b_i\varphi_i)
=\sum_{i=1}^{n_m}\chi^{r_m}_m(b_i)\varphi_i+b_i\operator{A}_m\varphi_i
=\sum_{i=1}^{n_m}\left(\chi^{r_m}_m(b_i)+\lambda_i b_i\right)\varphi_i\\
&=\sum_{i=1}^{n_m}\gamlindex{r_m}(b_i)\varphi_i.
\end{align*}
Comparing coefficients with~\eqref{eq:wspectral}, we infer that~$a_i=\gamlindex{r_m}(b_i)$, and thus, $b_i=\gamliindex{r_m}(a_i)$, for any~$i=1,\ldots,n_m$. Therefore,
\[
\gami(w)=v=\sum_{i=1}^{n_m}\gamliindex{r_m}(a_i)\varphi_i,
\]
which is~\eqref{eq:gamAi}. Now, employing~\eqref{eq:Aspectral}, we obtain
\[
\Lnorm{\gami(w)}{\infty}{I_m;\hspace{H}}^2
=\sup_{I_m}\sum_{i=1}^{n_m}\left|\gamliindex{r_m}(a_i)\right|^2
\le\sum_{i=1}^{n_m}\Lnorm{\gamliindex{r_m}(a_i)}{\infty}{I_m}^2.
\]
Applying Proposition~\ref{prop:stabgamliL2}, we arrive at
\[
\Lnorm{\gami(w)}{\infty}{I_m;\hspace{H}}
\le C_mk_m^{\nicefrac12}\left(\sum_{i=1}^{n_m}\Lnorm{a_i}{2}{I_m}^2\right)^{\nicefrac12}
=C_mk_m^{\nicefrac12}\Lnorm{w}{2}{I_m;\hspace{H}}.
\]
Recalling~\eqref{eq:Cbound} completes the proof.
\end{proof}

\subsection{Stability of homogeneous problem}

For~$1\le m\le M$, we denote by~$\hsol\in\pol$ the solution of the discrete problem
\begin{equation}\label{eq:hsol}
\frac{\dd}{\dd t}\hsol + \operator{A}_{m}\hsol 
+ \lifting{r_m}{m}(\hsol(t_{m-1})) 
=\lifting{r_m}{m}(\pi_mU_{m-1}^-),  
\end{equation}
where~$U_{m-1}^-\in\hspace{H}$ is a given value. Note that this is~\eqref{eq:semilinear-dg-strong} with~$f\equiv 0$.

\begin{lemma}\label{lem:properties-hsol}
Let $\hsol \in \pol$ be the solution of \eqref{eq:hsol}. 
Then, we have the stability estimate $\Lnorm{\hsol}{\infty}{I_m ; \hspace{H}} \leq \hnorm{U_{m-1}^{-}}{H}$.
\end{lemma}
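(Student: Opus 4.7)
The plan is to recognize \eqref{eq:hsol} as the operator equation $\gam{r_m}(\hsol) = \lifting{r_m}{m}(\pi_m U_{m-1}^-)$, and then to invert $\gam{r_m}$ by reducing to the scalar analysis of Section~\ref{sec:scalar} via the spectral decomposition of~$\operator{A}_m$. The dG fundamental solutions $\psit{r_m}{\lambda_i}$ will appear naturally as the diagonal entries in the resulting spectral representation of~$\hsol$, and the bound $|\psit{r_m}{\lambda_i}| < 1$ will do all the work.

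First, I would expand the projected initial datum in the eigenbasis of $\operator{A}_m$: write $\pi_m U_{m-1}^- = \sum_{i=1}^{n_m} c_i \varphi_i$ with $c_i = \iprod{\pi_m U_{m-1}^-}{\varphi_i}{\hspace{H}}$. The explicit formula \eqref{eq:liftingstrong} makes $\lifting{r_m}{m}$ separate into a scalar-in-time factor times its ``vector'' argument, i.e., $\lifting{r_m}{m}(z)(t) = z \cdot \lifting{r_m}{m}(1)(t)$, so linearity yields the clean decomposition
\[
\lifting{r_m}{m}(\pi_m U_{m-1}^-)(t) = \sum_{i=1}^{n_m} c_i \lifting{r_m}{m}(1)(t)\,\varphi_i.
\]
Applying Proposition~\ref{prop:gammapinverse} componentwise, and invoking the characterization \eqref{eq:psi-characterization2}, then gives the explicit representation
\[
\hsol(t) = \sum_{i=1}^{n_m} c_i \gamliindex{r_m}\!\left(\lifting{r_m}{m}(1)\right) \varphi_i = \sum_{i=1}^{n_m} c_i \psit{r_m}{\lambda_i}(t)\, \varphi_i, \qquad t \in I_m.
\]

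From the $\hspace{H}$-orthonormality of the $\{\varphi_i\}$, cf.~\eqref{eq:Aspectral}, we read off $\hnorm{\hsol(t)}{H}^2 = \sum_{i=1}^{n_m} c_i^2\, |\psit{r_m}{\lambda_i}(t)|^2$. The crucial input is then Proposition~\ref{prop:stab-psi} together with the sandwich bound~\eqref{eq:psisandwich}, which give $|\psit{r_m}{\lambda_i}(t)| \le \psit{r_m}{\lambda_i}(t_{m-1}) < 1$ uniformly in $t\in I_m$ for every~$i$; coercivity of $\operator{A}$, cf.~\eqref{eq:Aprop}, ensures $\lambda_i > 0$, so these scalar results are applicable to every mode. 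Plugging in yields $\hnorm{\hsol(t)}{H}^2 \le \sum_i c_i^2 = \hnorm{\pi_m U_{m-1}^-}{H}^2$, and finally the $\hspace{H}$-stability of $\pi_m$, cf.~\eqref{eq:stabpi}, provides $\hnorm{\pi_m U_{m-1}^-}{H} \le \hnorm{U_{m-1}^-}{H}$. Taking the supremum in $t$ then gives the claim.

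I do not expect a genuine obstacle: all of the analytic heavy lifting has already been carried out in Section~\ref{sec:scalar}. The only point requiring a little care is the scalar/vector factorization of $\lifting{r_m}{m}(\pi_m U_{m-1}^-)$, which is what legitimizes the termwise application of Proposition~\ref{prop:gammapinverse} in the eigenbasis and lets the problem collapse to the scalar stability of $\psit{r_m}{\lambda_i}$.
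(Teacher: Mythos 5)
Your proposal is correct and follows essentially the same route as the paper's own proof: spectral decomposition of $\pi_m U_{m-1}^-$ in the eigenbasis of $\operator{A}_m$, the scalar factorization of the lifting operator, the resulting representation $\hsol=\sum_i c_i\,\psit{r_m}{\lambda_i}\varphi_i$, and the scalar bounds \eqref{eq:psitimax} and \eqref{eq:psisandwich} combined with the stability \eqref{eq:stabpi} of $\pi_m$. No discrepancies to report.
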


\begin{proof}
We use the spectral decomposition $\pi_mU_{m-1}^-=\sum_{j=1}^{n_m}a_j\varphi_j$, with constant coefficients~$a_1,\ldots,a_{n_m}$.
Furthermore, exploiting the representation of the lifting operator from~\eqref{eq:liftingstrong}, and involving~\eqref{eq:psi-characterization}, there holds
\[
\lifting{r_m}{m}(\pi_mU_{m-1}^-)
=\sum_{j=1}^{n_m}a_j\lifting{r_m}{m}(1)\varphi_j
=\sum_{j=1}^{n_m}a_j\Gamma^{r_m}_{\lambda_j,m}(\psit{r_m}{\lambda_j})\varphi_j,
\]
where we slightly abuse notation by denoting the lifting operator on~$\hspace{X}_m$ and on~$\mathbb{R}$ in the same way. Hence, by virtue of~\eqref{eq:strong2}, with~$f\equiv 0$, and due to~\eqref{eq:gamAi}, we observe that
\begin{equation}\label{eq:hsolrep}
\hsol
=(\Gamma_m^{r_m})^{-1}(\lifting{r_m}{m}(\pi_mU_{m-1}^-))=\sum_{j=1}^{n_m} a_j\varphi_j\psit{r_m}{\lambda_j}.
\end{equation}
Using orthogonality, and applying~\eqref{eq:psitimax} and~\eqref{eq:psisandwich}, this leads to
\begin{align*}
\Lnorm{\hsol}{\infty}{I_m;\hspace{H}}^{2}
\le\sum_{j=1}^{n_m} a_j^2\Lnorm{\psit{r_m}{\lambda_j}}{\infty}{I_m}^2
\le\sum_{j=1}^{n_m}a_j^2 
=\|\pi_mU_{m-1}^-\|_{\hspace{H}}^2.
\end{align*}
Finally, applying the stability property \eqref{eq:stabpi} completes the proof.
\end{proof}

\begin{remark}\label{rm:errorhom}
We notice that~$\hsol$ defined in~\eqref{eq:hsol} is the fully discrete approximation of the solution of the homogeneous parabolic equation~\eqref{eq:slPDE}, with~$f\equiv 0$, on the time interval~$I_m$. For $t\in I_m$, the latter can be represented as~$\Psi(t)=e^{-\operator{A}(t-t_{m-1})}u(t_{m-1})$. Consequently, for~$t\in I_m$, the error satisfies the identity
\begin{equation}\label{eq:errorshom}
\begin{split}
\Psi(t)-\hsol(t)
&=e^{-\operator{A}(t-t_{m-1})}\left(u(t_{m-1})-\pi_mU_{m-1}^-\right)\\
&\quad+\left(e^{-\operator{A}(t-t_{m-1})}-e^{-\operator{A}_m(t-t_{m-1})}\right)\pi_mU_{m-1}^-\\
&\quad+\left(e^{-\operator{A}_m(t-t_{m-1})}\pi_mU^-_{m-1}-\hsol(t)\right).
\end{split}
\end{equation}
Let us briefly discuss the three terms on the right-hand side of the above equality. By stability, the first term in~\eqref{eq:errorshom} may simply be estimated by
\begin{align*}
\sup_{t\in I_m}&\hnorm{e^{-\operator{A}(t-t_{m-1})}\left(u(t_{m-1})-\pi_mU_{m-1}^-\right)}{\hspace{H}}\\
&\le \hnorm{u(t_{m-1})-\pi_mU_{m-1}^-}{\hspace{H}}
\le \hnorm{u(t_{m-1})-U_{m-1}^-}{\hspace{H}}+\hnorm{U_{m-1}^--\pi_mU_{m-1}^-}{\hspace{H}},
\end{align*}
which shows that this term is bounded by the error in the previous time step, and by a mesh change contribution. Moreover, the second term in~\eqref{eq:errorshom} refers to a Galerkin discretization error in space. Finally, using the spectral decomposition of~$\pi_mU_{m-1}^-$ as in the proof of Lemma~\ref{lem:properties-hsol}, and recalling~\eqref{eq:hsolrep}, the third term in~\eqref{eq:errorshom} can be written in the form
\[
e^{-\operator{A}_m(t-t_{m-1})}\pi_mU^-_{m-1}-\hsol(t)
=\sum_{j=1}^{n_m}a_j\varphi_j\left(e^{-\lambda_j(t-t_{m-1})}-\psit{r_m}{\lambda_j}\right),\qquad t\in I_m.
\]
Thus,
\[
\sup_{t\in I_m}\hnorm{\ex{-\operator{A}_{m}(t-t_{m-1})}\pi_mU_{m-1}^{-} - \hsol(t)}{\hspace{H}}^2
\leq \sum_{j=1}^{n_{m}} a_{j}^{2}\Lnorm{\errorode{m}{\lambda_{j}}}{\infty}{I_m}^2, 
\]
where the scalar error~$\errorode{m}{\lambda}$ is defined in~\eqref{eq:error}. Employing~\eqref{eq:Cbound0}, we notice that
$\Lnorm{\errorode{m}{\lambda_{j}}}{\infty}{I_{m}} \leq 2|\errorode{m}{\lambda_{j}}(t_{m-1})|$, and therefore obtain
\begin{align*}
\sup_{t\in I_m}\hnorm{\ex{-\operator{A}_{m}(t-t_{m-1})}\pi_mU_{m-1}^{-} - \hsol(t)}{\hspace{H}}
\le 2\hnorm{\pi_mU_{m-1}^-}{\hspace{H}}\sup_j\left|1-\psit{r_m}{\lambda_j}(t_{m-1})\right|.
\end{align*}
In particular, we see that the third term converges spectrally as~$r_m\to\infty$.
\end{remark}

\subsection{Stability of inhomogeneous problem}

Let us now turn to the stability of the fully discrete dG discretization~\eqref{eq:semilinear-dg}--\eqref{eq:u0} of the linear parabolic problem~\eqref{eq:slPDE}.

\begin{theorem}[$\fspace{L}^\infty(\hspace{H})$-stability of the dG time stepping method]\label{thm:main}
For any~$1\le m\le M$ the fully discrete dG time stepping solution~$U\in\prod_{m=1}^M\pol$ from~\eqref{eq:semilinear-dg} fulfills the stability estimate
\begin{equation}\label{eq:main}
\Lnorm{U}{\infty}{(0,t_m);\hspace{H}}
\le \|\pi_0u_0\|_{\hspace{H}}
+\gamma_mt^{\nicefrac12}_m\Lnorm{f}{2}{(0,t_m);\hspace{H}}.
\end{equation}
Here, we let $\gamma_m:=\max_{1\le i\le m}C_i$, where, for~$1\le i\le M$, the constant~$C_i$ is defined in~\eqref{eq:CM}.
\end{theorem}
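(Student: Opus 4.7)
The plan is to leverage the pointwise form of the scheme---equation~\eqref{eq:strong2}---together with the two key ingredients already at hand: the $\fspace{L}^\infty(\hspace{H})$-bound for $\gami$ from Proposition~\ref{prop:gammapinverse} and the stability bound for the discrete homogeneous evolution $\hsol$ from Lemma~\ref{lem:properties-hsol}. The work then reduces to (i) splitting $U|_{I_m}$ into a homogeneous and an inhomogeneous part, (ii) bounding each separately on a single time step, and (iii) propagating the resulting recursion across the time partition.

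First, on each $I_m$, since $\gam{r_m}$ is an isomorphism on $\pol$, I would use linearity of $\gami$ to write
\begin{equation*}
U|_{I_m} = \gami\bigl(\lifting{r_m}{m}(\pi_m U_{m-1}^-)\bigr) + \gami(\projts f).
\end{equation*}
The first summand is precisely the solution~$\hsol$ of~\eqref{eq:hsol}, so Lemma~\ref{lem:properties-hsol} bounds it by $\hnorm{U_{m-1}^-}{H}$ (with $U_0^- = \pi_0 u_0$ when $m=1$, cf.~\eqref{eq:u0}). For the second summand, Proposition~\ref{prop:gammapinverse} gives
\begin{equation*}
\Lnorm{\gami(\projts f)}{\infty}{I_m;\hspace{H}} \le C_m k_m^{\nicefrac12} \Lnorm{\projts f}{2}{I_m;\hspace{H}} \le C_m k_m^{\nicefrac12} \Lnorm{f}{2}{I_m;\hspace{H}},
\end{equation*}
where the last step uses the $\fspace{L}^2(I_m;\hspace{H})$-stability of the projection $\projts$. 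The triangle inequality then yields the one-step estimate
\begin{equation*}
\Lnorm{U}{\infty}{I_m;\hspace{H}} \le \hnorm{U_{m-1}^-}{H} + C_m k_m^{\nicefrac12} \Lnorm{f}{2}{I_m;\hspace{H}}.
\end{equation*}

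In the second step I would cascade this bound. Since $U|_{I_m}\in\pol$ is continuous on $\bar I_m$, one has $\hnorm{U_m^-}{H}=\hnorm{U|_{I_m}(t_m)}{H}\le \Lnorm{U}{\infty}{I_m;\hspace{H}}$, and a straightforward induction from $i=1$ up to $i=m$ yields
\begin{equation*}
\Lnorm{U}{\infty}{(0,t_m);\hspace{H}} = \max_{1\le i\le m}\Lnorm{U}{\infty}{I_i;\hspace{H}} \le \hnorm{\pi_0 u_0}{H} + \sum_{i=1}^{m} C_i k_i^{\nicefrac12} \Lnorm{f}{2}{I_i;\hspace{H}}.
\end{equation*}
To finish, I would majorize each $C_i$ by $\gamma_m=\max_{1\le i\le m} C_i$ and apply the discrete Cauchy--Schwarz inequality, using $\sum_{i=1}^m k_i = t_m$ and the disjoint partition identity $\sum_{i=1}^m \Lnorm{f}{2}{I_i;\hspace{H}}^2 = \Lnorm{f}{2}{(0,t_m);\hspace{H}}^2$, to obtain
\begin{equation*}
\sum_{i=1}^{m} C_i k_i^{\nicefrac12} \Lnorm{f}{2}{I_i;\hspace{H}} \le \gamma_m\left(\sum_{i=1}^m k_i\right)^{\nicefrac12}\left(\sum_{i=1}^m \Lnorm{f}{2}{I_i;\hspace{H}}^2\right)^{\nicefrac12} = \gamma_m t_m^{\nicefrac12} \Lnorm{f}{2}{(0,t_m);\hspace{H}},
\end{equation*}
which is exactly~\eqref{eq:main}.

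I do not anticipate a genuine obstacle, since Proposition~\ref{prop:gammapinverse} and Lemma~\ref{lem:properties-hsol} have already absorbed all the technical difficulties---the spectral decomposition of $\operator{A}_m$, the intricate scalar analysis of Section~\ref{sec:scalar}, and the uniformity of the constant in $(r_m,\lambda,k_m)$. The only delicate point is the bookkeeping required to confirm that the $k_m^{\nicefrac12}$-weighted sum collapses cleanly into $t_m^{\nicefrac12}$ through Cauchy--Schwarz; this relies on the uniform majorant $\gamma_m$, which exists precisely because of the discretization-robust bound~\eqref{eq:Cbound}.
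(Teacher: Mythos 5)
Your proposal is correct and follows essentially the same route as the paper: the same decomposition of $U|_{I_i}$ via \eqref{eq:strong2} into $\hsoli$ plus $\gamii(\projtsi f)$, the same one-step bound from Lemma~\ref{lem:properties-hsol} and Proposition~\ref{prop:gammapinverse}, and the same telescoping of $\hnorm{U_{i-1}^-}{H}\le\Lnorm{U}{\infty}{I_{i-1};\hspace{H}}$ followed by Cauchy--Schwarz. The only cosmetic difference is that you run a forward induction over all intervals, whereas the paper picks the index $i^\star$ where the maximum is attained and iterates backwards; the two are equivalent.
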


\begin{proof}
For~$1\le i\le m$, we invert~\eqref{eq:strong2} to infer the solution formula
\begin{equation}\label{eq:soli}
U|_{I_i}=\gamii(\lifting{r_i}{i}(\pi_iU_{i-1}^-))+\gamii(\projtsi f)
=\hsoli+\gamii(\projtsi f),
\end{equation}
where~$\hsoli$ is the solution from~\eqref{eq:hsolrep}. Then, Lemma \ref{lem:properties-hsol} implies that
\[
\Lnorm{U}{\infty}{I_i;\hspace{H}}
\le\|U_{i-1}^-\|_{\hspace{H}}
+\Lnorm{(\Gamma_i^{r_i})^{-1}(\projtsi f)}{\infty}{I_i;\hspace{H}}.
\]
Furthermore, employing~\eqref{eq:Ginv} together with the $\fspace{L}^2(I_i;\hspace{H})$-stability of~$\projtsi$, we have
\[
\Lnorm{(\Gamma_i^{r_i})^{-1}(\projtsi f)}{\infty}{I_i;\hspace{H}}
\le C_ik_i^{\nicefrac12}\Lnorm{\projtsi f}{2}{I_i;\hspace{H}}
\le C_ik_i^{\nicefrac12}\Lnorm{f}{2}{I_i;\hspace{H}}.
\]
This yields the bound
\begin{equation}\label{eq:ULinf}
\Lnorm{U}{\infty}{I_i;\hspace{H}}
\le \|U_{i-1}^-\|_{\hspace{H}}
+C_ik_i^{\nicefrac12}\Lnorm{f}{2}{I_i;\hspace{H}}.
\end{equation}

Select now~$i^\star\in\{1,\ldots,m\}$ such that~$\Lnorm{U}{\infty}{(0,t_m);\hspace{H}}=\Lnorm{U}{\infty}{I_{i^\star};\hspace{H}}$. Then, with~\eqref{eq:ULinf} there holds
\[
\Lnorm{U}{\infty}{(0,t_m);\hspace{H}}
\le \|U_{i^\star-1}^-\|_{\hspace{H}}
+C_{i^\star}k_{i^\star}^{\nicefrac12}\Lnorm{f}{2}{I_{i^\star};\hspace{H}}.
\]
In order to estimate the first term on the right-hand side of the above inequality, we iterate the bound~\eqref{eq:ULinf}, thereby yielding
\begin{align*}
\Lnorm{U}{\infty}{(0,t_m);\hspace{H}}
&\le\Lnorm{U}{\infty}{I_{i^\star-1};\hspace{H}}
+C_{i^\star}k_{i^\star}^{\nicefrac12}\Lnorm{f}{2}{I_{i^\star};\hspace{H}}\\
&\le \|U_{i^\star-2}^-\|_{\hspace{H}}
+\sum_{i=i^\star-1}^{i^\star}C_{i}k_{i}^{\nicefrac12}\Lnorm{f}{2}{I_{i};\hspace{H}}\\[-2ex]
&\,\,\,\vdots\\[-2ex]
&\le \|U_{0}^-\|_{\hspace{H}}
+\sum_{i=1}^{i^\star}C_{i}k_{i}^{\nicefrac12}\Lnorm{f}{2}{I_{i};\hspace{H}}.
\end{align*}
Recalling~\eqref{eq:u0}, and applying the Cauchy-Schwarz inequality, we obtain
\[
\Lnorm{U}{\infty}{(0,t_m);\hspace{H}}
\le \|\pi_0u_{0}\|_{\hspace{H}}
+\gamma_m\left(\sum_{i=1}^{m}k_{i}\right)^{\nicefrac12}
\left(\sum_{i=1}^m\Lnorm{f}{2}{I_{i};\hspace{H}}^2\right)^{\nicefrac12},
\]
and the proof is complete.
%
\end{proof}

\begin{remark}
Noticing~\eqref{eq:Cbound}, we emphasize that the constant~$C_m$ appearing in~\eqref{eq:CM} tends to~$1$ as~$r_m\to\infty$. Consequently, $\gamma_m\to 1$, as~$r_m\to\infty$, in~\eqref{eq:main}.
\end{remark}

\begin{remark}
For~$t\in I_m$, the solution of the linear parabolic problem~\eqref{eq:slPDE} is given by 
\begin{equation*}
u(t) = \ex{-\operator{A}(t-t_{m-1})}u(t_{m-1}) + \int_{t_{m-1}}^{t}\ex{-\operator{A}(t-s)}f(s)\dd s.
\end{equation*}
Hence, recalling the solution formula~\eqref{eq:soli} for the discrete problem on~$I_m$, we have
\begin{align*}
u(t)-U(t)=\mathfrak{H}(t)+\mathfrak{I}(t),\qquad t\in I_m,
\end{align*}
where the terms $\mathfrak{H}(t)=\ex{-\operator{A}(t-t_{m-1})}u(t_{m-1})-\hsol(t)$, with~$\hsol$ from~\eqref{eq:hsolrep},  and
\[
\mathfrak{I}(t)=\int_{t_{m-1}}^{t}\ex{-\operator{A}(t-s)}f(s)\dd s-\gami(\projts f)(t)
\]
correspond to the homogeneous and inhomogeneous part of the PDE, respectively. Here,  to bound the error~$\Lnorm{u-U}{\infty}{I_m;\hspace{H}}$, we can employ our previous analysis in Remark~\ref{rm:errorhom} to control
$\Lnorm{\mathfrak{H}}{\infty}{I_m}$. Additionally, in order to estimate~$\Lnorm{\mathfrak{I}}{\infty}{I_m}$, let $\projts f=\sum_{i=1}^{n_m}{f}_i(t) \varphi_{i}$ be the spectral decomposition of~$\projts f$. By Proposition~\ref{prop:gammapinverse} and Corollary~\ref{cor:gammainverse} we have that~$\gami(\projts f)=\sum_{i=1}^{n_m}\gamliindex{r_m}(f_i)\varphi_i$, and thus,
\begin{align*}
\gami&(\projts f)\\
&=\sum_{i=1}^{n_m}- \frac{\errorode{m}{\lambda_{i}}(t)}{\errorode{m}{\lambda_{i}}(t_{m-1})}\int_{I_m}f_i(s)\phit{r_m}{\lambda_{i}}(s)\dd s 
+ \int_{t_{m-1}}^{t}\sum_{i=1}^{n_m} \ex{-\lambda_i (t-s)}f_i(s)\varphi_i  \dd s\\
&=\sum_{i=1}^{n_m}- \frac{\errorode{m}{\lambda_{i}}(t)}{\errorode{m}{\lambda_{i}}(t_{m-1})}\int_{I_m}f_i(s)\phit{r_m}{\lambda_{i}}(s)\dd s 
+ \int_{t_{m-1}}^{t} \ex{-\operator{A}_m(t-s)}\projts f(s)\dd s.
\end{align*}
Then,
\begin{align*}
\mathfrak{I}(t)
&=
\sum_{i=1}^{n_m}\frac{\errorode{m}{\lambda_{i}}(t)}{\errorode{m}{\lambda_{i}}(t_{m-1})}\int_{I_m}f_i(s)\phit{r_m}{\lambda_{i}}(s)\dd s
+\int_{t_{m-1}}^{t}\ex{-\operator{A}(t-s)}\left(f(s)-\projts f(s)\right)\dd s\\
&\quad+\int_{t_{m-1}}^{t}\left(\ex{-\operator{A}(t-s)}-\ex{-\operator{A}_m(t-s)}\right)\projts f(s)\dd s.
\end{align*}
We notice that the second integral is a data approximation term (which, with the aid of stability, can be estimated further), and the third integral relates to the spatial Galerkin discretization. Incidentally, the second term in~\eqref{eq:errorshom} and the third integral above add to the semi-discrete error in space; cf.~\cite[\S6]{Thomee:06}. Moreover, recalling~\eqref{eq:Cbound0}, the first term can be estimated by
\[
\left|\sum_{i=1}^{n_m}\frac{\errorode{m}{\lambda_{i}}(t)}{\errorode{m}{\lambda_{i}}(t_{m-1})}\int_{I_m}f_i(s)\phit{r_m}{\lambda_{i}}(s)\dd s\right|
\le 2\sum_{i=1}^{n_m}\left|\int_{I_m}f_i(s)\phit{r_m}{\lambda_{i}}(s)\dd s\right|.
\]
Even though both sides of the the above inequality are computable, we could proceed further by means of the Cauchy-Schwarz inequality (which results in a more pessimistic bound):
\[
\sum_{i=1}^{n_m}\left|\int_{I_m}f_i(s)\phit{r_m}{\lambda_{i}}(s)\dd s\right|
\le\left(\sum_{i=1}^{n_m}\Lnorm{f_i}{2}{I_m}^2\right)^{\nicefrac12}
\left(\sum_{i=1}^{n_m}\Lnorm{\phit{r_m}{\lambda_i}}{2}{I_m}^2\right)^{\nicefrac12}.
\]
Whilst the first term on the right-hand side of the above inequality can be bounded by~$\Lnorm{f}{2}{I_m;\hspace{H}}$ the second term can be estimated by means of Lemma~\ref{lem:phiL2}.
\end{remark}

\bibliographystyle{amsplain}
\bibliography{myrefs}
\end{document}